\newtheorem{theorem}{Theorem}[section]
\newtheorem{lemma}[theorem]{Lemma}
\newtheorem{proposition}[theorem]{Proposition}
\newtheorem{corollary}[theorem]{Corollary}
\newtheorem{example}[theorem]{Example} 
\newcommand{\ben}{\begin{enumerate}}
	\newcommand{\een}{\end{enumerate}}
\newcommand{\bt}{\begin{theorem}}
	\newcommand{\et}{\end{theorem}}
\newcommand{\bl}{\begin{lemma}}
	\newcommand{\el}{\end{lemma}}
\newcommand{\bc}{\begin{corollary}}
	\newcommand{\ec}{\end{corollary}}
\newcommand{\bp}{\begin{proposition}}
	\newcommand{\ep}{\end{proposition}}
\newcommand{\br}{\begin{remark}}
	\newcommand{\er}{\end{remark}}
	\newcommand{\be}{\begin{example}}
	\newcommand{\ee}{\end{example}}
\newtheorem{remark}{Remark}
\providecommand{\keywords}[1]
{
	\small	
	{\noindent\textit{Keywords: }} #1
}
\title{Coupling results and Markovian structures for number representations of continuous random variables}
\author{J. M\o ller}
\date{}
\begin{document}

\maketitle

\begin{abstract}
A general setting for nested subdivisions of a bounded real set into intervals defining the digits $X_1,X_2,...$ of a random variable $X$ with a probability density function $f$ is considered. Under the weak condition that $f$ is almost everywhere lower semi-continuous, a coupling between $X$ and a non-negative integer-valued random variable $N$ is established so that $X_1,...,X_N$ have an interpretation as the ``sufficient digits'', since the distribution of $R=(X_{N+1},X_{N+2},...)$ conditioned on $S=(X_1,...,X_N)$ does not depend on $f$. Adding a condition about a Markovian structure of the lengths of the intervals in the nested subdivisions, $R\,|\,S$ becomes a Markov chain of a certain order $s\ge0$. If $s=0$ then $X_{N+1},X_{N+2},...$ are IID with a known distribution. When $s>0$ and the Markov chain is uniformly geometric ergodic, a coupling is established between $(X,N)$ and a random time $M$ so that the chain after time $\max\{N,s\}+M-s$ is stationary and $M$ follows a simple known distribution. The results are related to several examples of number representations generated by a dynamical system, including base-$q$ expansions, generalized L\"uroth series, $\beta$-expansions, and continued fraction representations. The importance of the results and some suggestions and open problems for future research are discussed.
\end{abstract}

\keywords{Approximation;
beta-expansion; 
continued fraction;  
dynamical system;  
L\"{u}roth series; 
Markov chain;
nested partitions;
pseudo golden ratio; 
read once algorithm;
sufficient digits.}

\section{Introduction}\label{s:intro}

Let $X$ be a continuous random variable with 
digit expansion $X=0.X_1X_2...$ in the usual decimal numeral system (base-10) or another numeral system as described below. This paper shows that under very mild conditions, a non-negative integer-valued random variable $N$ exists  so that $S=(X_1,...,X_N)$ are the ``sufficient digits'' in the sense that the distribution of $R=(X_{N+1},X_{N+2},...)$ conditioned on $S=(X_1,...,X_N)$ does not depend on the distribution of $X$. Under a further condition ensuring a Markovian structure of the numeral system, which is satisfied for base-10 and many {\color{black}other} numeral systems, it is also shown that $R\,|\,S$ is a Markov chain of a certain order $s\ge0$. When $s=0$, which e.g.\ is satisfied for base-10, $X_{N+1},X_{N+2},...$ are shown to be IID with a known distribution. When $s>0$ and the Markov chain is uniformly geometric ergodic, 
a coupling is established between $(X,N)$ and a random time $M$ so that the chain after time $\max\{N,s\}+M-s$ is stationary and $M$ follows a simple known distribution. Before further presenting these results and the outline of this paper,
the
 background material on numeral systems presented in Section~\ref{s:1.1} 
is needed. The importance of the results are wide ranging as discussed in
Section~\ref{s:conclusion}.  


\subsection{Nested subdivisions}\label{s:1.1}

The general setting of nested subdivisions described in this section is considered throughout the text. Specific  constructions and examples are given in Section~\ref{s:examples}.

Let $\mathcal X$ be a finite or countable set, and
denote $\mathbb R,\mathbb N,$ and $\mathbb N_0$ the sets of real numbers, positive integers, and non-negative integers, respectively, and $\mathcal X^{\mathbb N}$ the set of all sequences $(x_1,x_2,...)$ with elements in  $\mathcal X$. 

Suppose $I=I_\emptyset\subset\mathbb R$ is a Borel set of Lebesgue measure $\ell_\emptyset=|I|\in(0,\infty)$ and for every $n\in\mathbb N$ there is a subdivision (at level $n$)
\begin{equation}\label{e:partition}
I=\bigcup_{(x_1,...,x_n)\in \mathcal X^n}I_{x_1,...,x_n}\bigcup  D_n
\end{equation}
such {\color{black}that} the following properties are satisfied. The $I_{x_1,...,x_n}$'s and $D_n$  are pairwise disjoint sets.
Each $I_k$ with  $k\in\mathcal X$  is an open non-empty interval, 
and for $n>1$, 
 $I_{x_1,...,x_n}$ is an open and possibly empty interval 
 such that $I_{x_1,...,x_n}\subseteq I_{x_1,...,x_{n-1}}$. 
 Each $D_n$ is a subset of the endpoints of the non-empty $I_{x_1,...,x_n}$'s, {\color{black}and so $D_n\subseteq D_{n+1}$.} 

Some comments, further notation, and one more assumption are in order. 

{\color{black} For illustrative purposes, in the well-known ``base-10 case'', $\mathcal X=\{0,1,...,9\}$, $I=[0,1)$, and for every $n\in\mathbb N$ and $(x_1,...,x_n)\in\{0,1,...,9\}^n$, $I_{x_1,...,x_n}$ has left and right endpoints $\sum_{i=1}^n x_i10^{-i}$ and $10^{-n}+\sum_{i=1}^n x_i10^{-i}$, respectively, and $D_n$ is the set of all such left endpoints. This example can be much generalized as discussed later in Examples~\ref{ex:a}--\ref{ex:2}. Continuous fractions (Example~\ref{ex:contfrac} below) provides an  example where $\mathcal X$ is infinite countable and non-equidistant subdivisions are used. Furthermore, considering non-equidistant subdivisions will be of importance for statistical models as discussed in Section~\ref{s:f1}.}

When $n\ge2$, $I_{x_1,...,x_n}$
is allowed to be empty, 
since restrictions on the digits may be needed in some cases (e.g.\ Example~\ref{ex:2} below).

 Let $D=\cup_{n=1}^\infty
D_n$ be the set of all endpoints. Since each $D_n$ is finite or countable, $D$ is finite or countable, and so $|D|=0$. {\color{black} }

It is only for convenience that each interval $I_{x_1,...,x_n}$ is assumed to be open. 
The assumption  is of no importance for the probabilistic results in this paper, since $|D|=0$ and all considered probability distributions on $I$ will be assumed to be absolutely continuous. Therefore, ignoring Lebesgue nullsets, the partitions given by \eqref{e:partition} for $n=1,2,...$ are said to be nested subdivisions of $I$. 

Let $J=I\setminus D$. Then $|I|=|J|$ {\color{black} and
\begin{equation}\label{e:Jeq}
J=
\bigcap_{n=1}^\infty\bigcup_{(x_1,...,x_n)\in\mathcal X^n}I_{x_1,...,x_n}\setminus D.
\end{equation}
}

For every $(x_1,x_2,...)\in\mathcal X^{\mathbb N}$ and $n\in\mathbb N$, let $\ell_{x_1,...,x_n}=|I_{x_1,...,x_n}|$ be the length of $I_{x_1,...,x_n}$ and assume 
\begin{equation}\label{e:convergence}
\ell_{x_1,...,x_n}\to0 \quad\mbox{as }n\to\infty.
\end{equation}
Denote $a_{x_1,...,x_n}$ the left endpoint of $I_{x_1,...,x_n}$.  For
 $x\in I_{x_1,...,x_n}$, 
the error $e_{n}(x)$ and the relative error $u_{n}(x)$ of the approximation 
$x\approx a_{x_1,...,x_n}$ as compared to $\ell_{x_1,...,x_n}$ are defined by 
\[e_{n}(x)=x-a_{x_1,...,x_n},\quad u_{n}(x)=e_{n}(x)/\ell_{x_1,...,x_n}. \]
By \eqref{e:convergence}, $e_{n}(x)\to0$ as $n\to\infty$. If $x\in J$ then $0<u_n(x)<1$.

Define the digits (or coefficients or symbols) of every $x\in J$ as the unique sequence $(x_1,x_2,...)\in\mathcal X^{\mathbb N}$ such that
$x\in I_{x_1,...,x_n}$ for $n=1,2,...$. 
The assumption \eqref{e:convergence} means that $x=\lim_{n\to\infty}a_{x_1,...,x_n}$ is determined  by its digits. Express this 
one-to-one correspondence between $x\in J$ and its digits by writing
\begin{equation}\label{e:natural}
x=.x_1x_2...
\quad\mbox{for $x\in J$}.
\end{equation} 
{\color{black} This notation agrees with the usual digit representations used for the example of the base-10 case.}

For $n\in\mathbb N_0$ and $x\in J$, call $x^{[n]}=.x_{n+1}x_{n+2}...$ and $x_{>n}=(x_{n+1},x_{n+2},...)$
the $n$-th (scaled) remainder of $x$ and the $n$-th tail of $x$, respectively, where  
$x^{[0]}=x$. Let $J_{> 0}=\{x_{>0}\,|\,x\in J\}$. Then, for any fixed $n\in\mathbb N_0$, 
there is a one-to-one correspondence between $x^{[n]}\in J$ and $x_{> n}\in J_{>0}$. 

\subsection{Objective and main results}\label{s:1.3}

\subsubsection{{\color{black}Overview}}

Let $X$ be a random variable with values in $J$ and a probability density function (PDF) $f$ concentrated on $J$. 
Denote its digits by $X=0.X_1X_2...$, cf.\ \eqref{e:natural}. Then $(X_1,X_2,...)$ is a stochastic process with state space $\mathcal X$. For $n\in\mathbb N$,
writing $A_n=a_{X_1,...,X_n}$ and $L_n=\ell_{X_1,...,X_n}$, {\color{black} then by \eqref{e:Jeq} and \eqref{e:convergence},
$A_n<X<A_n+L_n$ and $L_n\to0$ as $n\to\infty$.} 
The aim of this paper is to show how coupling methods can be used for establishing a number of interesting results for the distribution of 
\begin{equation}\label{e:aaa}
X^{[n]}=.{X_{n+1}X_{n+2}...}\quad\mbox{or equivalently}\quad X_{> n}=(X_{n+1},X_{n+2},...),
\end{equation}
the distributions of
\begin{equation}\label{e:bbb}
E_n=e_n(X)=X-A_n\quad\mbox{and}\quad U_n=u_n(X)=E_n/L_n,
\end{equation}
and the distributions of further stochastic variables/processes obtained when $n\in\mathbb N_0$ in \eqref{e:aaa} and \eqref{e:bbb} is replaced by a random time obtained from one of the
 various coupling constructions studied in detail in Sections~\ref{s:coupling-const} and \ref{s:Markov}. 
These distributions are of interest when considering number representations of $X$ as in Section~\ref{ex:exs} below as well as many other examples 
(see \cite{Karma,Dajani2,HerbstEtAl,HerbstEtAl2,Fritz} and the references therein). {\color{black} The main results in this paper extend those included in \cite{HerbstEtAl} (in which only base-$q$ expansions with $q\in\{2,3,...\}$ is considered), however the proofs are shorter and different. In particular, under weak conditions the existence of a random variable $N\in\mathbb N_0$ will be established so that the distribution of the residual process 
 \begin{equation}\label{e:star3}
 R=(R_1,R_2,...)=(X_{N+1},X_{N+2},...)=X_{>N}
 \end{equation} 
 conditioned on 
 \begin{equation}\label{e:star1}
S=(X_1,...,X_N)
\end{equation} 
does not depend on the distribution of $X$. Surprisingly, this distribution depends only on ratios of the lengths of intervals in the nested subdivisions and often a Markovian structure appears as discussed in Section~\ref{s:details} below.
Moreover, the
results have important applications for statistical methodology and metric number theory as discussed in Section~\ref{s:conclusion}.}

\subsubsection{{\color{black}Details}}\label{s:details}

The weak condition  \eqref{e:LSC} below is used when establishing any result in Sections~\ref{s:coupling-const} and \ref{s:Markov}. 
Recall that lower semi-continuity (LSC) of $f$ at a point $x\in I$ means $\lim\inf_{{x_n\to x}}f(x_n)$ $\ge f(x)$
for any sequence of points $x_n$ converging to $x$. Commonly used PDFs are LSC almost everywhere. 
Consider the condition that
\begin{equation}\label{e:LSC}
\mbox{$f$ is LSC on a Borel set $H\subseteq J$ where $|J\setminus H|=0$}
\end{equation} 
(hence $|I\setminus H|=0$). This is indeed a very mild condition -- for an example where it is violated, see Remark~1 in \cite{HerbstEtAl3}. {\color{black}The set $H$ is just introduced for generality since a PDF is only unique up to a Lebesgue nullset. The choice of $H$ is commented in Remark~1 below.}

Assuming \eqref{e:LSC}, there is a useful coupling between $X$ and a non-negative integer-valued random variable $N$. The coupling construction is detailed in Theorem~\ref{thm1}, which is summarized below using the following notation. 
If $n=0$, interpret  $(x_1,...,x_n)$ as the ``empty sequence of digits'', which is denoted $\emptyset$, and set $\mathcal X^0=\{\emptyset\}$ (this notation is similar to the use of $\emptyset$ for the empty point configuration in connection to finite point processes \cite{JM}). 
{\color{black} Then $S$ defined by \eqref{e:star1}}
becomes a discrete random variable with state space $\cup_{n=0}^\infty\mathcal X^n$, the set of all finite sequences of digits including the case of no digits. 
 For $\ell>0$,
denote $\mathrm{Unif}(0,\ell)$ and $\mathrm{Unif}(I)$ the uniform distribution on $(0,\ell)$ and $I$, respectively. 
For mathematical convenience, introduce two random variables $E_0$ and $U_0$ such that conditioned on $N=0$, $S\perp\! \! \!\perp E_0\sim\mathrm{Unif}(0,\ell_\emptyset)$ and  
$S\perp\! \! \!\perp U_0\sim\mathrm{Unif}(0,1)$ where $\perp\! \! \!\perp$ means independence. Define
\begin{equation}\label{e:star2}
 E=E_N,\quad U=U_N=E/L\quad\mbox{where }L=
 \begin{cases}\ell_N &\text{ if }N>0\\
 \ell_{\emptyset}&\text{ if }N=0.
 \end{cases}
\end{equation} 
 Then
$(X,N)$ and $(S,U)$ are 
in a one-to-one correspondence.

Among other things Theorem~\ref{thm1} states that $S\perp\! \! \!\perp U\sim\mathrm{Unif}(0,1)$; conditioned on $L$, one has $S\perp\! \! \!\perp E\sim\mathrm{Unif}(0,L)$; and the conditional distribution of
$R\,|\,S$ does not depend on $f$ but is expressible in terms of ratios between the lengths of the intervals in the nested subdivisions. These results extend those in \cite{HerbstEtAl} which only considered the special case of base-$q$ expansions (with
$q>1$ an integer, cf.\ Example~\ref{ex:a} below; see also \cite{part1,part2}). 

The results in Section~\ref{s:coupling-const} are expanded in Section~\ref{s:Markov}, assuming
the lengths of the intervals in 
the nested subdivisions of $I$ exhibit the following structure.
Set $0/0=0$ and for a given $s\in\mathbb N_0$, assume that
for any integer $n\ge s+2$ and every $(x_1,...,x_n)\in\mathcal X^n$, the ratio $\ell_{x_1,...,x_{n}}/\ell_{x_1,...,x_{n-1}}$ depends only on $(x_{n-s},...,x_{n})$, which is written as
\begin{equation}\label{e:MC2}
\ell_{x_1,...,x_{n}}/\ell_{x_1,...,x_{n-1}}= p_{x_{n-s},...,x_{n}}.
\end{equation}
Then the lengths of the intervals are said to exhibit a Markovian structure of order $s$ because $R\,|\,S$ turns out to be a Markov chain of order $s$ as explained below.

The special case $s=0$ is treated in Section~\ref{s:coupling-const2}, where it is shown that $X_1,X_2,...$ are independent identically distributed (IID) with a known distribution. Suppose $s>0$. 
Define 
\begin{equation}\label{e:omega}
\Omega=\{(x_1,...,x_{s})\in \mathcal X^{s}\,|\, \ell_{x_1,...,x_{s}}>0\}
\end{equation}
and a matrix $P$ with entries indexed by $((x_1,...,x_s),(y_1,...,y_s))\in\Omega\times\Omega$ such that
\begin{equation}\label{e:Ps}
P_{(x_1,...,x_s),(y_1,...,y_s)}=\begin{cases}
p_{x_1,...,x_{s},y_s} & \text{if }y_i=x_{i+1}\text{ for }1\le i<s\\
0 & \text{otherwise.}
\end{cases}
\end{equation}
For $i\in\mathbb N$, let $X^{(s)}_i=(X_i,X_{i+1},...,X_{i+s-1})$ and $R^{(s)}_i= 
 X^{(s)}_{N+i}$, and for $m\in\mathbb N_0$, let
\begin{align}\label{e:def-not1}
X^{(s)}&=(X^{(s)}_1,X^{(s)}_2,...), 
\quad X^{(s)}_{>m}=(X^{(s)}_{m+1},X^{(s)}_{m+1},...),\\ 
R^{(s)}&=(R^{(s)}_1,R^{(s)}_2,...) 
,\quad \,\,R^{(s)}_{>m}=(R^{(s)}_{m+1},R^{(s)}_{m+2},...) 
,\label{e:def-not2}
\end{align}
so $R^{(s)}_{\color{black}{>m}}=X^{(s)}_{>N+m}$. It is shown in Section~\ref{s:Markov1} that $R^{(s)}\,|\,S$ is a Markov chain with transition matrix $P$. In particular, if the chain $R^{(s)}\,|\,S$ is uniformly geometric ergodic, a coupling of $(X,N)$ with a random time $M$ is established in Section~\ref{s:Markov2} such that $X^{(s)}_{>\max\{N,s\}+M-s}$ is stationary and $M$ is independent of the chain and follows a simple known distribution. This coupling construction is based on the read once algorithm for perfect simulation
developed by Wilson \cite{wilson}. 

\subsection{Outline}\label{s:out}

Section~\ref{s:examples} discusses various procedures of constructing nested subdivisions and presents examples used later on. The coupling construction between $X$ and $N$ and related results are given in Section~\ref{s:coupling-const}. Under various conditions, Section~\ref{s:Markov} shows that $R^{(s)}\,|\,S$ is a Markov chain,  
specifies the coupling between $(X,N)$ and $M$, and establishes related results.
Section~\ref{s:conclusion} concludes with a brief discussion of the importance of the results 
and some open problems.

\section{Procedures for nested subdivisions}\label{s:examples}

Assuming the first subdivision of $I$ into the intervals $I_k=(a_k,a_k+\ell_k)$ with $k\in\mathcal X$ is given, it is 
explained in Section~\ref{s:2.1} how probability distributions and in Section~\ref{s:1.2} how dynamical systems may be used to generate nested subdivisions of $I$. Various illustrative examples are presented in Section~\ref{ex:exs}, including generalized L\"uroth series, $\beta$-expansions, and continued fraction representations.

\subsection{Markovian constructions}\label{s:2.1}

Given the subdivision of $I$ into the intervals $I_k=(a_k,a_k+\ell_k)$ with $k\in\mathcal X$, an integer $s\ge0$, and a probability mass function (PMF)
  on $\mathcal X^m$ with $m=s+1$, 
the following procedure is in accordance with the Markovian structure in \eqref{e:MC2}. 

Consider first the case $s=0$ and define a PMF by $p_k=\ell_k/\ell_\emptyset$ for all $ k\in\mathcal X$. In accordance with \eqref{e:MC2}, for each $(x_1,...,x_n)\in\mathcal X^{n}$ with $n\in\mathbb N$, let
$\ell_{x_1,...,x_{n}}=p_{x_1}\cdots p_{x_{n}}\ell_\emptyset$. Recursively, for $n=1,2,...$, assuming $a_{x_1,...,x_n}$ has been specified, let the subdivision of $I_{x_1,...,x_n}$ 
into the 
intervals $I_{x_1,...,x_n,k}$ 
with $k\in\mathcal X$ be such that 
$a_{x_1,...,x_n,j}<a_{x_1,...,x_n,k}$ if $a_j<a_k$. 
  This ordering is such that the corresponding $\ell_{x_1,...,x_n,k}$'s are proportional to the $\ell_k$'s (the lengths of the intervals in the first subdivision of $I$). Of course another ordering could have been used as well.

Consider next the case $s{\color{black}\ge}1$. Let $p_k$ be as above and for $(x_1,...,x_{n+1})\in\mathcal X^{n+1}$ with $1\le n\le s$, suppose $p_{x_1,...,x_n,x_{n+1}}$ is a PMF with respect to $x_{n+1}\in\mathcal X$ such that $p_{x_1}\cdots p_{x_1,...,x_m}$ becomes a PMF with respect to $(x_1,...,x_{m})\in\mathcal X^m$. 
%
For 
$(x_1,...,x_{n+1})\in\mathcal X^{n+1}$ with $n\in\mathbb N$, let 
$$\ell_{x_1,...,x_{n+1}}=\ell_{x_1,...,x_n}\times\begin{cases}
p_{x_1,...,x_n,x_{n+1}}&\text{if }n\le s\\
p_{x_{n-s+1},...,x_n,x_{n+1}}&\text{if }n> s.
\end{cases}$$ 
If $\ell_{x_1,...,x_n}>0$, specify the left endpoints of the intervals in the subdivision of $I_{x_1,...,x_n}$ as above, i.e., 
$a_{x_1,...,x_n,j}<a_{x_1,...,x_n,k}$ if $a_j<a_k$. Thus \eqref{e:MC2} is satisfied and the nested subdivisions of $I$ are specified such that the ordering of the intervals at the
 first level is reused for the intervals in the subdivision of any interval at a later level.

\subsection{Dynamical systems}\label{s:1.2}

A common way to construct nested partitions is by using a dynamical system as follows (see also \cite{Karma,Dajani2,Fritz}). 
Let $T:I\mapsto I$ be a  function and $I=\cup_{k\in\mathcal X}\tilde I_k$ a countable partition such that for  each $k\in\mathcal X$, $\tilde I_k$ is a bounded interval of length $\ell_k>0$ and the restriction  of $T$ to $\tilde I_k$ is a strictly monotone function which is denoted $T_k$. For $n=1,2,...$, let $T^n(x)=T(T^{n-1}(x))$ be the state of the dynamical system at time $n$ when it is started at $T^0(x)=x$, and let the $n$-th digit of $x$ be $x_n=k$ if $T^n(x)\in \tilde I_k$.  For every $(x_1,...,x_n)\in\mathcal X^n$ with $n\in\mathbb N$, define 
$$\tilde I_{x_1,...,x_n}=\{x\in I\,|\,T^{i-1}(x)\in\tilde I_{x_i}, i=1,...,n\}$$
which by induction is seen to be a bounded interval, and let
\begin{equation}\label{e:dyn-system}
\mbox{$I_{x_1,...,x_n}$ agree with $\tilde I_{x_1,...,x_n}$ if the endpoints of $\tilde I_{x_1,...,x_n}$ are excluded.}
\end{equation}
As mentioned in Section~\ref{s:1.1}, it is of no importance for the results in this paper but only for convenience that the endpoints  are excluded. In the dynamical system literature, $\tilde I_{x_1,...,x_n}$ is called a cylinder.

Henceforth, ``a setting for a dynamical system'' refers to a situation where a dynamical system as above is given such that \eqref{e:natural} holds and each $I_{x_1,...,x_n}$ is specified by \eqref{e:dyn-system}. By induction, if $x=.x_1x_2...\in J$, then 
$x^{[n]}=T^n(x)\in J$ for $n=0,1,...$. Thus, since $X\in J$,  
\[X^{[n]}=T^n(X)\in J\quad\mbox{for }n=0,1,...\]
Moreover, if each $T_k$ is a diffeomorphism on $I_k$ except for a Lebesgue nullset, then
for every $n\in\mathbb N_0$, the distribution of $X^{[n]}$ is absolutely continuous and has a PDF on $J$ which for almost all $x=.x_1x_2...\in J$ is given by
\begin{equation}\label{e:fTn}
f^{[n]}(x)=\sum_{(k_1,...,k_n)\in\mathcal X^n:\,\ell_{k_1,...,k_n}>0}
{f\left( .k_1...k_nx_1x_2... 
\right)}\left|\left(T_{k_1}^{-1}\circ\cdots\circ T_{k_n}^{-1}\right)'(x)\right|
\end{equation} 
This follows from the transformation theorem of densities by considering each case of $X\in I_{k_1,...,k_n}$, 
and 
since $T_{k_1}^{-1}\circ\cdots\circ T_{k_n}^{-1}(x)=.k_1...k_nx_1x_2...$.

By definition, the process $X_{>0}=(X_1,X_2,...)$ is stationary if and only if $X_{>0}$ and $X_{>1}$ are identically distributed, which in the present setting can be reformulated: Stationarity is equivalent to that 
$X=0.X_1X_2...\sim f$ implies $X^{[1]}=0.X_2X_3....\sim f$, and for short $f$ is then said to be shift invariant.
In the metric number theory literature (e.g.\ \cite{Karma,Dajani2,Fritz}), 
a distribution $\mu$ on $(I,\mathcal B)$ is said to be $T$-invariant if $\mu(A)=\mu(T^{-1}(A))$ for all $A\in\mathcal B$. Then $\mu$ being absolutely continuous and $T$-invariant corresponds to that $f$ is shift invariant.

\subsection{Examples}\label{ex:exs}

Using the approach in Section~\ref{s:2.1}, it may not always be obvious to see if the nested subdivisions could also be constructed using a setting for a dynamical system as in Section~\ref{s:1.2} -- an exception is the case $s=0$ which is treated in Example~\ref{ex:a} below. On the other hand, 
many examples of important number representations are generated by a dynamical system, and they are not all of the `Markovian type'. A few examples are summarised in this section and used for illustrative purposes later on in the text. Further examples can be found in e.g.\ \cite{Karma,Dajani2,Fritz}.

In the following Examples~\ref{ex:a}--\ref{ex:contfrac}, first a dynamical system is specified and second it is understood that nested partitions are constructed as in Section~\ref{s:1.2}. 

\be\label{ex:a} 
Suppose that $I$ agrees with $[0,1]$ perhaps except at 0 or 1, 
 and a dynamical system is given 
such that \eqref{e:natural} holds and for every $k\in\mathcal X$, the map $T_k$ restricted to $I_k$ is a linear bijection into $(0,1)$ with slope $t_k /\ell_k$ where $t_k\in\{\pm1\}$. This
is the setting for a generalized L\"{u}roth series (GLS), cf.\ \cite{Barrionuevo}. Since the signs $t_k$ can vary, it is convenient that it is not needed in the present paper to specify whether $I$ and $\tilde I_k$ include or not each of their endpoints. Note that $T_{x_n}\circ\ldots\circ T_{x_1}$ restricted to $I_{x_1,...,x_n}$ is a linear bijection $I_{x_1,...,x_n}\mapsto(0,1)$ with slope $(t_{x_1}/\ell_{x_1})\cdots (t_{x_n}/\ell_{x_n})$. Thus,
\[\ell_{x_1,...,x_n}=\ell_{x_1}\cdots\ell_{x_n}\quad\mbox{for all }(x_1,...,x_n)\in\mathcal X^n,\]
and so the Markovian structure in \eqref{e:MC2} is satisfied when $s=0$. Moreover, the left endpoints of the intervals in the subdivision of $I_{x_1,...,x_n}$ are ordered in the same way as in Section~\ref{s:2.1} if 
 $t_{x_1}\cdots t_{x_n}=1$, and in the reverse order if  $t_{x_1}\cdots t_{x_n}=-1$ (i.e., $a_{x_1,...,x_n,j}>a_{x_1,...,x_n,k}$ if $a_j<a_k$). Consequently,  
if $t_k=1$ for all $k\in\mathcal X$, 
 the construction from Section~\ref{s:2.1} results in the same nested partitions, and the construction can be modified so that it works in any case: For the subdivision of $I_{x_1,...,x_n}$, use the same procedure as in Section~\ref{s:2.1} except if $t_{x_1}\cdots t_{x_n}=-1$, in which case the left endpoints should appear in the reverse order. 
 
 One important and well-known case occurs when $q\ge2$ is an integer, $I=[0,1)$, $\mathcal X=\{0,1,...,q-1\}$, and for each $k\in\mathcal X$, $\tilde I_k=[k/q,(k+1)/q)$ and $T_k(x)=qx-k$. Then every $x\in [0,1)$ has a base-$q$ expansion
\begin{equation}\label{e:sumx}
x=\sum_{i=1}^\infty x_i q^{-i}\quad\mbox{with }x_i=\lfloor q T^{i-1}(x)\rfloor,\ i=1,2,...,
\end{equation} 
and $J$ is the set of those $x$ where the sum is not finite and there exists no $n\in\mathbb N$ so that $x_n=x_{n+1}=....=q-1$. Here, $\lfloor a\rfloor$ is the integer part of $a\ge0$.  
Moreover, $a_{x_1,...,x_n}=\sum_{i=1}^n x_i q^{-i}$, $\ell_{x_1,...,x_n}=q^{-n}$, and $T^n(x)=\sum_{i=1}^\infty x_{i+n}q^{-i}$.

Another special case is an `original' L\"{u}roth series 
where $I=(0,1)$ and $\mathcal X=\{2,3,...\}$, and where 
L\"{u}roth \cite{Luroth} showed that any $x\in(0,1)$ has a series
\begin{equation}\label{e:sumxx}
x=\frac{1}{x_1}+\frac{1}{x_1(x_1-1)x_2}+...+\frac{1}{x_1(x_1-1)\cdots x_{n-1}(x_{n-1}-1)x_n}+...
\end{equation}
with $(x_1,x_2,...)\in\mathcal X^{\mathbb N}$ (sometimes in the literature, e.g.\ in \cite{Dajani2}, 0 is included into $I$ but again this is of no importance for the present paper).
Defining
 for each $k\in\mathcal X$,
$\tilde I_k=[1/k,1/(k-1))$ and $T_k(x)=(k-1)kx-k+1$, it turns out that each digit $x_n$ is uniquely given by $x_n=k$ if $1/k<T^n(x)<1/(k-1)$ (expressions of $T^n(x)$ and technical details for the case $T^n(x)=1/k$ are given in \cite{Dajani2}; fortunately this case can be ignored in the present paper).
It follows that $J$ is the set of those $x$ where the sum in \eqref{e:sumxx} is not finite.
\ee

\be\label{ex:2} 
Let
$\beta>1$, 
$I=[0,1)$, and $T(x)=\beta x-\lfloor \beta x\rfloor$. If $\beta=q\in\{2,3,...\}$, then define $\mathcal X$ as in the case of the base-$q$ expansion above. Else let  $\mathcal X=\{0,1,...,\lfloor\beta\rfloor\}$, $\tilde I_k=[k/\beta,(k+1)/\beta)$ if $0\le k<\lfloor\beta\rfloor$, and $\tilde I_{\lfloor\beta\rfloor}=[\lfloor\beta\rfloor/\beta,1)$. 
 R\'{e}nyi \cite{renyi} showed that every $x\in[0,1)$ has a so-called $\beta$-expansion
\begin{equation}\label{e:beta-exp}
x=\sum_{i=1}^\infty x_i\beta^{-i}\quad\mbox{with }x_i=\lfloor \beta T^{i-1}(x)\rfloor,\ i=1,2,...
\end{equation}
which of course agrees with \eqref{e:sumx} when $\beta\in\{2,3,...\}$.  
The convergence of the series in \eqref{e:beta-exp} implies that \eqref{e:natural} holds.

Denote the set of all admissible sequences by
 $$\mathcal A_\beta=\{(\lfloor \beta T^{0}(x)\rfloor,\lfloor \beta T^{1}(x)\rfloor,...)\,|\,0\le x<1\},$$ 
 referring to \cite{Dajani2,parry} for the detailed description of $\mathcal A_\beta$. 
Clearly, $J_{>0}\subset \mathcal A_\beta$. For instance, if $\beta=q\in\{2,3,...\}$, then $A_\beta=\mathcal X^{\mathbb N}$ and $\mathcal A_\beta\setminus J_{>0}$ is the set of base-$q$ fractions, i.e., numbers $x$ where the sum in \eqref{e:sumx} is finite.


Another tractable case is the following.
 Let $s\in\mathbb N$, $m=s+1$, and 
$\beta=\beta_m>1$ be the pseudo golden mean of order $m$, that is, the positive root of $z^m-z^{m-1}-...-z-1=0$. 
Then $\beta_2<\beta_3<...<\lim_{n\to\infty}\beta_n=2$, where $\beta_2=(1+\sqrt5)/2$ is the golden mean.
So $\mathcal X=\{0,1\}$. Using \eqref{e:beta-exp} and induction for $n=1,2,...$, it is straightforwardly verified that for any $(x_1,...,x_n)\in\mathcal X^n$, 
$\ell_{x_1,...,x_n}$ is equal to
\begin{align*}
\beta^{-n}\times
\begin{cases}
0 &\text{if }n>s,\ x_i=...=x_{i+s}=1,\ 1\le i\le n-s\\
1 &\text{else if }x_n=0\\
\beta^i-\sum_{j=0}^{i-1}\beta^j &\text{else if }x_{n-i}=0,\ x_{n-i+1}=...=x_n=1,\ 1\le i\le s,\ i<n\\
\beta^n-\sum_{j=0}^{n-1}\beta^j &\text{else if }x_1=...=x_n=1,\ s\ge n.
\end{cases}
\end{align*}
Thus,  the admissible sequences of 0's and 1's are those with at most $s$ consecutive 1's
 (in accordance with \cite{parry}). It is easily seen that 
\eqref{e:MC2} holds with $p_{x_1,...,x_s,1}=1-p_{x_1,...,x_s,0}$ and
\begin{equation}\label{e:Pex}
p_{x_1,...,x_{s},0}=\begin{cases}
{1}/{\beta}         &\text{if }x_s=0   \\
{1}/{(\beta^{i+1}-\sum_{j=1}^i\beta^j)} &\text{if }x_{s-i}=0,\ 
x_{s-i+1}=...=x_s=1,\ 1 \le i<s\\
1&\text{if } x_1=\ldots=x_s=1.
\end{cases}
\end{equation}
Finally, $\Omega=\mathcal X^s$.
   \ee

\be\label{ex:contfrac} 
Recall the following about continued fractions \cite{Kin}.
Let $I=(0,1]$, $\mathcal X=\mathbb N$, $\tilde I_k=(1/(k+1),1/k]$ for $k\in\mathcal X$, and $T(x)=1/x-\lfloor 1/x\rfloor$ be the Gauss map \cite{gauss}. 
A rational number in $I$ has a (regular) finite continued fraction representation 
$$[0;x_1,...,x_n]=
\frac{1}{x_1+\frac{1}{x_2+\frac{1}{\ddots\frac{1}{x_n}}}}\in(0,1]
$$
and each irrational number $x\in(0,1)$ has a unique infinite continued fraction representation
\[x=[0;x_1,x_2,...]= \lim_{n\to\infty}[0;x_1,...,x_n],\]
with $x_1,x_2,...\in\mathbb N$. 
So $J$ consists of all irrational numbers between 0 and 1, and
\[\tilde I_{x_1,...,x_n}=\begin{cases}
\left[\,[0;x_1,...,x_n],[0;x_1,...,x_n+1]\,\right) &\text{for $n=2,4,6,...$}\\
\left(\,[0;x_1,...,x_n+1],[0;x_1,...,x_n]\,\right] &\text{for $n=1,3,5,...$}
\end{cases}
\]
For any $x=[0;x_1,x_2,...]\in J$,
the continued fraction remainder is given by $T^n(x)=[0;x_{n+1},x_{n+2},...]\in J$.
Defining $q_n=x_nq_{n-1}+q_{n-2}$ for $n\in\mathbb N$ with $q_{-1}=0$ and $q_0=1$, then $q_n$ is a strictly increasing function of $n$ and
$\ell_{x_1,...,x_n}^{-1}={q_n(q_n+q_{n-1})}$,
so
$\ell_{x_1,...,x_{n+1}}/\ell_{x_1,...,x_n}={q_n(q_n+q_{n-1})}/[{q_{n+1}(q_{n+1}+q_{n})}]$ depends on every digit $x_1,...,x_{n+1}$. Thus, \eqref{e:MC2} cannot be satisfied for any $s\in\mathbb N_0$.
\ee

\section{The sufficient digits}\label{s:coupling-const}

The following Theorem~\ref{thm1} establishes 
 the coupling between $X$ and a non-negative integer-valued random variable $N$ under the weak assumption that $f$ is almost everywhere LSC, cf.\ \eqref{e:LSC}. As noticed after Theorem~\ref{thm1}, $S=(X_1,...,X_N)$ becomes a sufficient statistic (if $N=0$ then $S$ is interpreted as the empty set). Therefore, $X_1,...,X_N$ are called the sufficient digits (though it is of course not meant that the approximation $X\approx A_n$ does not improve when $n>N$).  
 
\subsection{Coupling construction between $X$ and $N$} 

Some notation is needed. Assume \eqref{e:LSC} and recall the definition of $H$. For $n\in\mathbb N$ and $(x_1,...,x_n)\in\mathcal X^n$, let $c_\emptyset=i_\emptyset=\inf_H f$ and  
\[
c_{x_1,...,x_n}=i_{x_1,...,x_n}-i_{x_1,...,x_{n-1}}\]
{where }
\[i_{x_1,...,x_n}=\begin{cases}
\inf_{H\cap I_{x_1,...,x_n}}f&\text{if }H\cap I_{x_1,...,x_n}\not=\emptyset\\
0&\text{if }H\cap I_{x_1,...,x_n}=\emptyset.
\end{cases}
\]
If $n=0$, interpret $(x_1,...,x_n)\in\mathcal X^n$ as $\emptyset\in\{\emptyset\}$,
 $c_{x_1,...,x_n}$ as $c_\emptyset$, and $\ell_{x_1,...,x_n}$ as $\ell_\emptyset$.
Let $\nu$ be
the product measure on $H\times\mathbb N_0$ given by the product of Lebesgue measure on $H$ and counting measure on $\mathbb N_0$.

\begin{theorem}\label{thm1}
Assuming 
\eqref{e:LSC} and recalling the notation in \eqref{e:bbb} and \eqref{e:star1}--\eqref{e:star3}, the following items (A)--(D) hold. 
\begin{enumerate}
\item[(A)] There is a
coupling of $X$ with a non-negative integer-valued random variable $N$ such that the distribution of $(X,N)$ is absolutely continuous with respect to $\nu$, with density for any $(x,n)\in H\times \mathbb N_0$ given by
\begin{equation}\label{e:res1}
f(x,n)=c_{x_1,...,x_n} \quad\mbox{if }x=.x_1x_2...
\end{equation}
\item[(B)] 
For any $n\in\mathbb N_0$, $(x_1,...,x_n)\in\mathcal X^n$, 
and $x\in H$ with $f(x)>0$, 
\begin{align}\label{e:PXN}
\mathrm P(S=(x_1,...,x_n))&=c_{x_1,...,x_n}\ell_{x_1,...,x_n},
\\
\mathrm P(N\le n\,|\,X=x)&=i_{x_1,...,x_n}/f(x),\label{e:PN}
\\
\label{e:PNn}
\mathrm P(N\le n)&=\sum_{(x_1,...,x_n)\in\mathcal X^n}i_{x_1,...,x_n}\ell_{x_1,...,x_n}.
\end{align}
\item[(C)] One has
\begin{equation}\label{e:SU}
S\perp\! \! \!\perp U\sim\mathrm{Unif}(0,1)
\end{equation}
and  $R$ conditioned on $S=(x_1,...,x_n)$ has finite dimensional distributions
\begin{equation}\label{e:fdd}
\mathrm P(R_1=x_{n+1},...,R_m=x_{n+m}\,|\,S=(x_1,...,x_n))=\ell_{x_1,...,x_{n+m}}/\ell_{x_1,...,x_n}
\end{equation}
when $(x_1,...,x_{n+m})\in\mathcal X^{n+m}$ with $m\in\mathbb N$, $n\in\mathbb N_0$, and $\mathrm P(S=(x_1,...,x_n))>0$, i.e., provided $c_{x_1,...,x_n}\ell_{x_1,...,x_n}>0$.
\item[(D)] 
   Conditioned on $L$, one has $S\perp\! \! \!\perp E\sim\mathrm{Unif}(0,L)$.
\end{enumerate}
\end{theorem}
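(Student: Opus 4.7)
The plan is to establish (A) first and then derive (B)--(D) from the joint density \eqref{e:res1} by marginalization and conditioning. For (A), I would first note that along the nested chain $I_{x_1,...,x_{n-1}} \supseteq I_{x_1,...,x_n}$, the infima $i_{x_1,...,x_n}$ are nondecreasing in $n$, so $c_{x_1,...,x_n} \ge 0$, and the partial sums telescope to
\[
\sum_{m=0}^{n} c_{x_1,...,x_m} = i_{x_1,...,x_n}.
\]
The crucial analytic step is to show that $i_{x_1,...,x_n} \to f(x)$ as $n \to \infty$ for every $x \in H$ with digit expansion $x = .x_1x_2\ldots$: the bound $i_{x_1,...,x_n} \le f(x)$ is automatic since $x \in H \cap I_{x_1,...,x_n}$, while the reverse bound uses the LSC assumption \eqref{e:LSC} together with $\ell_{x_1,...,x_n} \to 0$ from \eqref{e:convergence}, which forces $I_{x_1,...,x_n}$ into any prescribed neighborhood of $x$ for large $n$. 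This yields $\sum_{n=0}^\infty c_{x_1,...,x_n} = f(x)$ for almost every $x$, which simultaneously shows that $(x,n) \mapsto c_{x_1,...,x_n}$ is a probability density on $H \times \mathbb{N}_0$ with respect to $\nu$ and confirms that its $X$-marginal agrees with $f$ (using $|J \setminus H| = 0$), thereby producing the coupling.

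Part (B) is then routine marginalization of \eqref{e:res1}. Formula \eqref{e:PXN} follows by integrating the constant density $c_{x_1,...,x_n}$ over $H \cap I_{x_1,...,x_n}$, whose Lebesgue measure is $\ell_{x_1,...,x_n}$. Formula \eqref{e:PN} comes from the conditional PMF $\mathrm P(N = m \mid X = x) = c_{x_1,...,x_m}/f(x)$ followed by the same telescoping as above, and \eqref{e:PNn} follows by integrating \eqref{e:PN} against $f$ and splitting the domain along the level-$n$ partition $\{I_{x_1,...,x_n}\}_{(x_1,...,x_n)\in\mathcal X^n}$.

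For (C) and (D), the key observation is that conditional on $(N, S) = (n, (x_1,...,x_n))$ with $n > 0$, the density \eqref{e:res1} is constant in $x$ on $I_{x_1,...,x_n}$, so $X$ is conditionally uniform there. Hence $E = X - a_{x_1,...,x_n}$ is uniform on $(0, \ell_{x_1,...,x_n})$ and $U = E/\ell_{x_1,...,x_n}$ is uniform on $(0,1)$, independently of which $(x_1,...,x_n)$ was conditioned on; the case $n = 0$ is built into the definitions of $E_0$ and $U_0$. This yields \eqref{e:SU} and (D) simultaneously, the latter because the conditional law of $E$ given $S$ depends only on $L$. For \eqref{e:fdd}, the same conditional uniformity reduces the event $\{R_1 = x_{n+1}, \ldots, R_m = x_{n+m}\}$ to $\{X \in I_{x_1,...,x_{n+m}}\}$, whose probability under uniform $X$ on $I_{x_1,...,x_n}$ is the claimed ratio of lengths.

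The only non-routine step is the convergence $i_{x_1,...,x_n} \to f(x)$ that underlies (A), and I expect this to be the main obstacle. It is the single place where the LSC hypothesis \eqref{e:LSC} enters, and some care is needed to handle the nullset $J \setminus H$ so that the map $(x,n) \mapsto c_{x_1,...,x_n}$, defined via digits that are only unambiguous on $J$, serves as a bona fide density on $H \times \mathbb{N}_0$. Once this identification is in hand, parts (B)--(D) reduce to bookkeeping with the joint density from (A).
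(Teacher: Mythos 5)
Your proposal is correct and follows essentially the same route as the paper's proof: establish (A) via the telescoping identity $\sum_{m=0}^n c_{x_1,...,x_m} = i_{x_1,...,x_n}$ and the LSC-driven convergence $i_{x_1,...,x_n} \to f(x)$ (which is exactly where \eqref{e:LSC} and \eqref{e:convergence} enter), then obtain (B)--(D) by marginalizing and conditioning the joint density \eqref{e:res1}, using that the density is constant on each $I_{x_1,...,x_n}$ to get conditional uniformity of $U$ (hence $E$) and the length-ratio formula \eqref{e:fdd}. The only difference is presentational: the paper verifies \eqref{e:SU} and \eqref{e:fdd} by explicit integral computations of $\mathrm P(U\le u\,|\,S)$ and the finite-dimensional probabilities, whereas you phrase the same computation as the observation that $X$ is conditionally uniform on $I_{x_1,...,x_n}$ given $(N,S)$.
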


\begin{proof}
The theorem is an extension of results in \cite{HerbstEtAl} for the special case of base-$q$ expansions (with
$q>1$ an integer, cf.\ Example~\ref{ex:a}). A general,
different, and shorter
proof is given below.
 
Let $x=.x_1x_2...\in H$.
Then, 
\[f(x)=\lim_{n\to\infty}i_{x_1,...,x_n}=\lim_{n\to\infty}\left(i_\emptyset+\sum_{i=1}^n c_{x_1,...,x_i}\right)=\sum_{n=0}^\infty c_{x_1,...,x_n}\]
where the first equality follows since $f$ is LSC at $x\in H$ and $x$ is in the interior of $I_{x_1,...,x_n}$, and the second equality follows from the definition of $i_{x_1,...,x_n}$. Clearly, $c_{x_1,...,x_n}\ge0$ and 
\[\int \sum_{n=0}^\infty c_{x_1,...,x_n}\, \mathrm dx=\int f(x)\,\mathrm dx=1,\]
so \eqref{e:res1} specifies a well-defined distribution on $H\times\mathbb N_0$.
Thereby the existence of the coupling with density \eqref{e:res1} follows, and so (A) is verified. 

From \eqref{e:res1} follows immediately \eqref{e:PXN}.
A straightforward calculation gives first \eqref{e:PN} using \eqref{e:res1} and second \eqref{e:PNn} using 
\eqref{e:PN}. This proves (B).

Since $(X,N)$ and $(S,U)$ are 
in a one-to-one correspondence, combining \eqref{e:res1} and \eqref{e:PXN} shows that for $u\in(0,1)$ and $\mathrm P(S=(x_1,...,x_n))>0$,
\begin{align*}
\mathrm P(U\le u\,&|\,S=(x_1,...,x_n))=\frac{\mathrm P(a_{x_1,...,x_n}<X\le a_{x_1,...,x_n}+ \ell_{x_1,...,x_n}u,N=n)}{\mathrm P(S=(x_1,...,x_n))}\\
&=\frac{\int_{a_{x_1,...,x_n}}^{a_{x_1,...,x_n}+u\ell_{x_1,...,x_n}} c_{x_1,...,x_n}\,\mathrm dx}{c_{x_1,...,x_n}\ell_{x_1,...,x_n}}=u.
\end{align*}
Thus, $U|S\sim\mathrm{Unif}(0,1)$ and hence \eqref{e:SU} follows. Furthermore, \eqref{e:res1} and
\eqref{e:PXN} give
\begin{align*}
&\mathrm P(R_1=x_{n+1},...,R_m=x_{n+m}\,|\,S=(x_1,...,x_n))\\
=\,&\frac{\mathrm P(X_1=x_1,...,X_n=x_n,X_{n+1}=x_{n+1},...,X_{n+m}=x_{n+m},N=n)}{\mathrm P(S=(x_1,...,x_n))}\\
=\,&\frac{\int_{I_{x_1,...,x_{n+m}}}c_{x_1,...,x_n\,\mathrm dx}}{c_{x_1,...,x_n}\ell_{x_1,...,x_n}}=\ell_{x_1,...,x_{n+m}}/\ell_{x_1,...,x_n}
\end{align*}
whereby \eqref{e:fdd} is proven. Finally, (D) is immediately obtained from \eqref{e:SU}, which completes the proof. 
\end{proof}

\begin{remark}
{\it 
Various comments on Theorem~\ref{thm1} are in order. 

The joint distribution of $(X,N)$ is specified by $X\sim f$.
Since $N$ is not a function of $X$, it may be treated as a latent variable.  Suppose 
a statistical model class  for the PDF $f$ is given (so that $f$ is LSC almost everywhere on $J$). Since $(X,N)$ is in a one-to-one correspondence with $(S,U)$, where $S\perp\! \! \!\perp U\sim\mathrm{Unif}(0,1)$,
it follows that $S$ is a sufficient statistic and $U$ is an ancillary statistic when imagining $N$ is observable, cf.\ \cite{B-N}.
Some suggestions on how to construct models for $S$ appear in Section~\ref{s:conclusion}.
 

The distribution of $N$ depends on the choice of $H$, that is, the version of the PDF $f$. In brief, to make $N$ small, pick $H$ in \eqref{e:LSC} as small as possible, cf.\ \eqref{e:PN}. 

{\color{black} By \eqref{e:PNn}, $\mathrm P(N\le n)$ is large if $f$ is approximately constant on the intervals $I_{x_1,...,x_n}$ where $\ell_{x_1,...,x_n}$ is not small.}

It is remarkable that the conditional distribution of $R$ given $S$, 
 depends not on $f$ but only on the ratios between the lengths of the intervals, cf.\ \eqref{e:fdd}. Therefore, for any $n\in\mathbb N_0$, conditioned on $N\le n$, the distribution of $X_{>n}$ (or equivalently $X^{[n]}$) depends only on these ratios. This property is exploited further in Section~\ref{s:Markov}. 
}
\end{remark}

\subsection{Corollaries}\label{s:coupling-const2}
 
 From \eqref{e:PNn} one easily gets the following corollary.

\begin{corollary}\label{c:first-cor}
Assume 
\eqref{e:LSC}. Then, for any fixed $n\in\mathbb N_0$,
\begin{equation}\label{e:PNn1}
P(N\le n)=1
\end{equation}
if and only if for every $(x_1,...,x_n)\in\mathcal X^n$, 
\begin{equation}\label{e:step}
\mbox{ $f(x)$ is constant for almost all $x\in I_{x_1,...,x_n}$.}
\end{equation}
In particular, $N=0$ almost surely if and only if $X\sim\mathrm{Unif}(I)$.
\end{corollary}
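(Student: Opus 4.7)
The plan is to read \eqref{e:PNn} of Theorem~\ref{thm1}(B) against the identity $\int_I f\,dx = 1$, and show that $1-P(N\le n)$ is a sum of manifestly nonnegative terms, each of which captures exactly the failure of $f$ to be constant on the corresponding cell.

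Concretely, first I would decompose $I$ (modulo Lebesgue nullsets) as the disjoint union of the $I_{x_1,\ldots,x_n}$'s with $(x_1,\ldots,x_n)\in\mathcal X^n$; this is legitimate because $|D_n|=0$ and $|I\setminus H|=0$, so restricting the domain to $H\cap I_{x_1,\ldots,x_n}$ in each integral costs nothing. Combining with \eqref{e:PNn} gives the key identity
\[
1-P(N\le n)=\sum_{(x_1,\ldots,x_n)\in\mathcal X^n}\int_{I_{x_1,\ldots,x_n}}\!\bigl(f(x)-i_{x_1,\ldots,x_n}\bigr)\,\mathrm dx.
\]
By the definition of $i_{x_1,\ldots,x_n}$ as $\inf_{H\cap I_{x_1,\ldots,x_n}} f$ whenever the cell has positive length (and $0$ otherwise, in which case the integral is trivially $0$), the integrand is nonnegative almost everywhere on $I_{x_1,\ldots,x_n}$. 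Hence each term in the sum is nonnegative.

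Consequently, $P(N\le n)=1$ holds if and only if every term vanishes, which, by the nonnegativity of the integrand, is equivalent to $f(x)=i_{x_1,\ldots,x_n}$ for almost every $x\in I_{x_1,\ldots,x_n}$ and every $(x_1,\ldots,x_n)\in\mathcal X^n$; this is exactly condition \eqref{e:step}. For the second assertion, specializing to $n=0$ turns \eqref{e:step} into ``$f$ is almost everywhere constant on $I$'', which combined with $\int_I f\,dx=1$ forces $f\equiv 1/|I|$ on $I$, i.e., $X\sim\mathrm{Unif}(I)$.

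There is no serious obstacle here; the only care needed is bookkeeping of the nullsets ($D_n$, $I\setminus H$, and degenerate cells with $\ell_{x_1,\ldots,x_n}=0$) so that the decomposition of $\int_I f\,dx$ and the pointwise bound $f\ge i_{x_1,\ldots,x_n}$ on $H\cap I_{x_1,\ldots,x_n}$ may be invoked without loss.
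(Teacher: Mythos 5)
Your argument is the one the paper intends: the paper gives no proof beyond ``From \eqref{e:PNn} one easily gets the following corollary,'' and the key step --- writing $1-\mathrm P(N\le n)=\sum_{(x_1,\ldots,x_n)\in\mathcal X^n}\int_{I_{x_1,\ldots,x_n}}\bigl(f(x)-i_{x_1,\ldots,x_n}\bigr)\,\mathrm dx$ with every summand nonnegative --- is exactly the right observation, and your handling of the nullsets $D_n$, $I\setminus H$ and degenerate cells is fine.

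One subtlety you slide over in the final equivalence is worth flagging. That $f=i_{x_1,\ldots,x_n}$ a.e.\ on each cell implies \eqref{e:step} is immediate, but the converse is not automatic for a \emph{fixed} $H$: an LSC function can equal a constant $c$ almost everywhere on a cell yet take a value strictly below $c$ at some point of $H$ in that cell (LSC at such a point only requires $\liminf f\ge f$, which is satisfied when the surrounding values are larger), in which case $i_{x_1,\ldots,x_n}<c$ and that summand stays strictly positive. So ``$f$ constant a.e.\ on each cell'' and ``$f=i_{x_1,\ldots,x_n}$ a.e.\ on each cell'' are not literally the same statement. The resolution is the paper's remark after Theorem~\ref{thm1} that the distribution of $N$ depends on the choice of $H$, equivalently on the version of $f$: under \eqref{e:step} one is free to take the version of $f$ that is actually constant on each cell, which is trivially LSC, and with that version (and the matching $H$) the infimum $i_{x_1,\ldots,x_n}$ equals the constant and your chain of equivalences closes. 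With that caveat made explicit the proof is complete and matches the paper's intent.
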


\be\label{ex:contfrac2}
For continued fraction representations, where $T$ is the Gauss map given in Example~\ref{ex:contfrac}, 
it is well-known 
that
\begin{equation}\label{e:fG}
f_{\mathrm{G}}(x)=\frac1{\ln 2}\frac1{1+x},\quad 0<x\le 1,
\end{equation}
is the PDF of the unique $T$-invariant distribution,  
see e.g.\ \cite{Karma}. By Corollary~\ref{c:first-cor}, $N$ is unbounded when $X\sim f_{\mathrm{G}}$.
\ee

The simplest case of \eqref{e:MC2} happens when $s=0$. Defining the PMF
$$\pi_{\mathrm{inv}}(k)=\ell_k/\ell_\emptyset\quad\mbox{for } k\in\mathcal X,$$
then 
\eqref{e:MC2} with $s=0$ is equivalent to
\begin{equation}\label{e:c1}
\ell_{x_1,...,x_n}/\ell_\emptyset=\pi_{\mathrm{inv}}(x_1)\cdots\pi_{\mathrm{inv}}(x_n) 
\quad \text{whenever } n\ge2 \text{ and } (x_1,...,x_n)\in\mathcal X^n.
\end{equation} 

\begin{corollary}\label{c:Luroth}
Assuming 
\eqref{e:LSC} and \eqref{e:c1}, the following items (A)--(D) hold.
\begin{enumerate}
\item[(A)] The following statements (I)--(III) are equivalent.
\begin{enumerate}
\item[(I)] $X_1,X_2,...$ are IID with PMF $\pi_{\mathrm{inv}}$.
\item[(II)] $X\sim \mathrm{Unif}(I)$.
\item[(III)] $N=0$ almost surely. 
\end{enumerate}
\item[(B)] $S\perp\! \! \!\perp R$ and
$R_1,R_2,...$ are IID with PMF $\pi_{\mathrm{inv}}$.
\item[(C)] $S\perp\! \! \!\perp   X^{[N]}\sim\mathrm{Unif}(I)$.
\item[(D)] For every $n\in\mathbb N_0$, conditioned on $N\le n$, one has $X^{[n]}\sim\mathrm{Unif}(I)$ and 
$X_{n+1},X_{n+2},...$ are IID with PMF $\pi_{\mathrm{inv}}$.
\end{enumerate}
\end{corollary}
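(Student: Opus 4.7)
The four items can be derived from Theorem~\ref{thm1} and Corollary~\ref{c:first-cor} by repeatedly exploiting the multiplicative form of \eqref{e:c1}. The workhorse identity is
\[
\ell_{x_1,\ldots,x_{n+m}}/\ell_{x_1,\ldots,x_n}=\prod_{i=1}^m \pi_{\mathrm{inv}}(x_{n+i}),
\]
which holds whenever $n,m\in\mathbb N_0$ and both sides are well defined under \eqref{e:c1}, together with the `IID digits $\Leftrightarrow$ uniform distribution' principle that I establish along the way.

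For (A), the equivalence (II)$\Leftrightarrow$(III) is immediate from the final assertion of Corollary~\ref{c:first-cor}. For (II)$\Rightarrow$(I), if $X\sim\mathrm{Unif}(I)$ then $\mathrm P(X_1=x_1,\ldots,X_n=x_n)=\ell_{x_1,\ldots,x_n}/\ell_\emptyset=\prod_i\pi_{\mathrm{inv}}(x_i)$ by \eqref{e:c1}, which yields the IID conclusion. Conversely, assuming the digits are IID $\pi_{\mathrm{inv}}$, the same identity gives $\mathrm P(X\in I_{x_1,\ldots,x_n})=|I_{x_1,\ldots,x_n}|/|I|$ for every cylinder; since the cylinders form a $\pi$-system and, by \eqref{e:convergence}, generate the Borel $\sigma$-algebra on $J$ (recalling $|D|=0$), this uniquely determines $X\sim\mathrm{Unif}(I)$.

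Part (B) follows by substituting \eqref{e:c1} into formula \eqref{e:fdd} of Theorem~\ref{thm1}(C): the right-hand side becomes $\prod_{i=1}^m\pi_{\mathrm{inv}}(x_{n+i})$, a quantity independent of the conditioning argument $(x_1,\ldots,x_n)$. Hence $R\perp\! \! \!\perp S$ and the finite-dimensional distributions of $R$ coincide with those of an IID $\pi_{\mathrm{inv}}$ sequence. For (C), the correspondence \eqref{e:natural} identifies $X^{[N]}$ with the sequence $R$; applying the direction (I)$\Rightarrow$(II) of Part (A) to $R$ gives $X^{[N]}\sim\mathrm{Unif}(I)$, while independence from $S$ is inherited from $R\perp\! \! \!\perp S$. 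Finally for (D), fix $n\in\mathbb N_0$ and condition on $N=k$ for some $k\le n$: by (B), $(X_{k+1},X_{k+2},\ldots)$ is IID $\pi_{\mathrm{inv}}$ and independent of $(X_1,\ldots,X_k)$, so the further tail $(X_{n+1},X_{n+2},\ldots)$ is again IID $\pi_{\mathrm{inv}}$; averaging over $k\in\{0,1,\ldots,n\}$ according to the conditional law of $N$ given $N\le n$ preserves this structure, and the associated $X^{[n]}$ is $\mathrm{Unif}(I)$ by the same cylinder argument.

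The only delicate point, used throughout (A), (C) and (D), is the `IID digits $\Leftrightarrow$ uniform distribution' equivalence: one must check that the cylinders generate the relevant Borel $\sigma$-algebra and that the IID $\pi_{\mathrm{inv}}$ law on $\mathcal X^{\mathbb N}$ is concentrated on $J_{>0}$, so that pushing it forward through the digit map is unambiguous. Both are immediate from \eqref{e:convergence} and $|D|=0$, and I do not anticipate any substantive obstacle.
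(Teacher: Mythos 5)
Your proposal is correct and follows essentially the same route as the paper's proof: the key step is substituting~\eqref{e:c1} into~\eqref{e:fdd} to get (B) (independence of $R$ from $S$ and the IID $\pi_{\mathrm{inv}}$ law for $R$), after which (A), (C), and (D) fall out. The only cosmetic difference is in how the equivalence in (A) is chained: the paper goes (III)$\Rightarrow$(I) using (B), then (II)$\Rightarrow$(III), then notes (I)$\Leftrightarrow$(II) ``straightforwardly'', whereas you obtain (II)$\Leftrightarrow$(III) from Corollary~\ref{c:first-cor} and prove (I)$\Leftrightarrow$(II) by the cylinder/$\pi$-system argument directly — the same underlying facts, cycled in a slightly different order.
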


\begin{proof} Let $(x_1,...,x_{n+m})\in\mathcal X^{n+m}$, $m\in\mathbb N$, and $n\in\mathbb N_0$. For $\mathrm P(S=(x_1,...,x_n))>0$ or equivalently $c_{x_1,...,x_n}\ell_{x_1,...,x_n}>0$, cf.\ \eqref{e:PXN}, it follows from \eqref{e:fdd} and \eqref{e:c1} that
\[\mathrm P(R_1=x_{n+1},...,R_m=x_{n+m}\,|\,S=(x_1,...,x_n))=\pi_{\mathrm{inv}}(x_{\color{black}{n+1}})\cdots\pi_{\mathrm{inv}}(x_{\color{black}{n+m}}).
\]
Thus (B) is verified. It is straightforwardly seen that (A) holds because of the following equivalences: From (B) and (III) follow (I); 
(II) implies (III); and (I) and (II) are equivalent. Finally, since $X^{[N]}=.R_1R_2...$, both (C) and (D) follow from (A) and (B).  
\end{proof}

For instance, \eqref{e:c1} is satisfied in the setting for a generalized L\"uroth series (GLS), 
including e.g.\ base-$q$ expansions, cf.\ Example~\ref{ex:a}. 
{\color{black} Conversely, given a PMF $\pi_{\mathrm{inv}}$ on $\mathcal X$, \eqref{e:c1} may be used for constructing nested subdivisions of $I$ where  each non-empty interval $I_{x_1,...,x_n}$ (interpreted as $I$ if $n=0$) is divided into intervals $I_{x_1,...,x_n,k}$ with proportions $\pi_{\mathrm{inv}}(k)$ for $k\in\mathcal X$.}
For the GLS setting, 
 the fact that (II) implies (I) in Corollary~\ref{c:Luroth} also follows from Lemma~1 in \cite{Barrionuevo}.

Let $Q^{[n]}$ be the probability distribution of $X^{[n]}$, $\mu$ a probability measure on the measure space $(I,\mathcal B)$ where 
 $\mathcal B$ is the class of Borel sets included in $I$, and $\|Q^{[n]}-\mu\|_{\mathrm{TV}}=\sup_{A\in\mathcal B}|Q^{[n]}(A)-\mu(A)|$ the total variation distance of the signed measure $Q^{[n]}-\mu$. It is well-known that if $\mu$ has PDF $h$ and $X^{[n]}$ has PDF $f^{[n]}$, then 
 \[\|Q^{[n]}-\mu\|_{\mathrm{TV}}=\frac12\int_I |f^{[n]}(x)-h(x)|\,\mathrm dx.\]
 For instance,  $f^{[n]}$ may be given by  \eqref{e:fTn}, and in the following corollary, $h=1/\ell_\emptyset$.


\begin{corollary}\label{c:coupling1}
Assume \eqref{e:LSC} and \eqref{e:c1}, and let $\mu_{\mathrm{inv}}=\mathrm{Unif}(I)$. Then there is a coupling between $(X,N)$ and a random variable $Y\sim\mu_{\mathrm{inv}}$ such that $X^{[n]}=Y^{[n]}\sim\mu_{\mathrm{inv}}$ when $N\le n$. {\color{black}In particular,}
\begin{equation}\label{e:coupin}
\|Q^{[n]}-\mu_{\mathrm{inv}}\|_{\mathrm{TV}}\le \mathrm P(N>n)\to0\quad\mbox{as }n\to\infty.
\end{equation}
\end{corollary}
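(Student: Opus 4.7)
The plan is to construct $Y$ by replacing the first $N$ digits of $X$ with a fresh IID $\pi_{\mathrm{inv}}$-sequence and keeping the remaining digits intact. Explicitly, on an enlarged probability space let $V_1,V_2,\ldots$ be IID with PMF $\pi_{\mathrm{inv}}$, independent of $(X,N)$, and define
\[Y=.Y_1Y_2\cdots,\qquad Y_i=\begin{cases}V_i&\text{if }i\le N,\\ X_i&\text{if }i>N.\end{cases}\]
With this construction the coupling property is automatic: on the event $\{N\le n\}$, every index $i>n$ also satisfies $i>N$, so $Y_i=X_i$ for all $i>n$, and hence $Y^{[n]}=X^{[n]}$.

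The substantive step is to verify that $Y\sim\mu_{\mathrm{inv}}$. By Corollary~\ref{c:Luroth}(A) this reduces to showing that $(Y_i)_{i\ge1}$ is IID with PMF $\pi_{\mathrm{inv}}$. I would condition on $S=(x_1,\ldots,x_n)$, which simultaneously fixes $N=n$. Corollary~\ref{c:Luroth}(B) tells us that $R=(X_{n+1},X_{n+2},\ldots)$ is IID $\pi_{\mathrm{inv}}$ and independent of $S$, and by construction $(V_1,V_2,\ldots)$ is IID $\pi_{\mathrm{inv}}$ and independent of $(X,N)$, hence of both $S$ and $R$. Therefore the spliced sequence $(V_1,\ldots,V_n,X_{n+1},X_{n+2},\ldots)$ is conditionally IID $\pi_{\mathrm{inv}}$ given $S=(x_1,\ldots,x_n)$. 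Since the conditional law does not depend on the particular $(x_1,\ldots,x_n)$, the unconditional law of $(Y_i)$ is also IID $\pi_{\mathrm{inv}}$, so $Y\sim\mu_{\mathrm{inv}}$.

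The total-variation bound then follows from the standard coupling inequality together with $Y^{[n]}=X^{[n]}$ on $\{N\le n\}$:
\[\|Q^{[n]}-\mu_{\mathrm{inv}}\|_{\mathrm{TV}}\le \mathrm P(X^{[n]}\ne Y^{[n]})\le \mathrm P(N>n),\]
and Theorem~\ref{thm1}(A) makes $N$ an a.s.\ finite $\mathbb N_0$-valued variable, so $\mathrm P(N>n)\to0$ as $n\to\infty$. I expect the only delicate point to be the splicing argument in the second paragraph, namely ensuring that stitching two IID sequences together at the random time $N$ still yields an IID sequence; this is handled cleanly by conditioning on $S$ and invoking Corollary~\ref{c:Luroth}(B), which makes the conditional law of $R$ free of $S$.
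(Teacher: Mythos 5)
Your proof is correct and uses essentially the same coupling as the paper: in both cases $X$ and $Y$ agree beyond the random time $N$, while the first $N$ digits of $Y$ are a fresh IID $\pi_{\mathrm{inv}}$ string independent of $S$. The paper merely builds the coupling in the reverse direction, taking $Y\sim\mu_{\mathrm{inv}}$ independent of $S$ and then defining $R_i=Y_{N+i}$, which makes $Y\sim\mu_{\mathrm{inv}}$ hold by construction and sidesteps the splicing verification that occupies your second paragraph.
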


\begin{proof} 
 In consistence with (A) and (B) in Corollary~\ref{c:Luroth}, let $S\perp\! \! \!\perp Y=.Y_{\color{black}1}Y_{\color{black}2}...\sim\mu_{\mathrm{inv}}$ and $R_1=Y_{N+1},R_2=Y_{N+2},...$. Conditioned on $N\le n$, one has $X^{[n]}=Y^{[n]}\sim\mu_{\mathrm{inv}}$, cf.\ (D) in Corollary~\ref{c:Luroth}. {\color{black} Since $X^{[n]}\sim Q^{[n]}$ and $Y^{[n]}\sim\mu_{\mathrm{inv}}$, the coupling inequality (see e.g.\ \cite{thorisson}) gives $\|Q^{[n]}-\mu_{\mathrm{inv}}\|_{\mathrm{TV}}\le \mathrm P(X^{[n]}\not=Y^{[n]})$, and so the inequality in \eqref{e:coupin} follows.}
 Since $N$ is finite, the convergence to 0 in \eqref{e:coupin} is obvious.
\end{proof}

\section{Markov chains}\label{s:Markov}

This section treats the case of the Markovian structure in \eqref{e:MC2}.
The special case $s=0$ has been treated in Section~\ref{s:coupling-const2}. Throughout this section, let $s\in\mathbb N$ be given, let $X\sim f$ where $f$ satisfies \eqref{e:LSC}, and let $S,N,R$ be as in Theorem~\ref{thm1}. {\color{black} Of course it is natural to pick the value of $s$ as small as possible such that \eqref{e:MC2} holds.}

\subsection{When the residual process is Markov}\label{s:Markov1}

Recall the notation in \eqref{e:omega}--\eqref{e:def-not2}. Observe that 
\eqref{e:MC2} with $n=s+1$ gives
\begin{equation}\label{e:p-def}
p_{x_1,...,x_s,x_{s+1}}= {\ell_{x_1,...,x_{s+1}}}/{\ell_{x_1,...,x_{s}}}\quad\mbox{for $(x_1,...,x_s),(x_2,...,x_{s+1})\in\Omega$}.
\end{equation}
{\color{black} It follows by induction that \eqref{e:MC2} is equivalent to
\begin{equation}\label{e:Markovlenghts}
\ell_{x_1,...,x_n}=\ell_{x_1,...,x_s}\,p_{x_1,...,x_{s+1}}\cdots p_{x_{n-s},...,x_n}
\end{equation}
whenever $n\in\{s+2,s+3,...\}$ and $(x_1,...,x_s),...,(x_{n-s+1},...,x_n)\in\Omega$. Conversely, given both $s\in\mathbb N$, nested subdivisions of $I$ at levels $n=1,...,s$, and transition probabilities  $p_{x_1,...,x_{s+1}}\ge0$ for $(x_1,...,x_s)\in\Omega$ and $x_{s+1}\in\mathcal X$ such that $\sum_{k\in \mathcal X}p_{x_1,...,x_{s},k}=1$ and $p_{x_1,...,x_{s},k}=0$ if $(x_2,...,x_s,k)\not\in\Omega$, then \eqref{e:Markovlenghts} may be used for constructing nested subdivisions of $I$ where each non-empty interval $I_{x_1,...,x_n}$ with $n\ge s$ is divided into intervals $I_{x_1,...,x_n,k}$ with proportions $p_{x_{n-s+1},...,x_{n},k}$ for $k\in\mathcal X$. 
}

\begin{theorem}\label{t:MC}
Assume 
\eqref{e:LSC} and \eqref{e:MC2}.
Consider any $n\in\mathbb N_0$ and $(x_1,...,x_n)\in\mathcal X^n$ with $\mathrm P(S=(x_1,...,x_n))>0$, that is, 
$c_{x_1,...,x_n}\ell_{x_1,...,x_n}>0$, cf.\ \eqref{e:PXN}. Conditioned on $S=(x_1,...,x_n)$, $R^{(s)}$ is a time-homogeneous Markov chain with transition matrix $P$ and  initial distribution
\begin{align}\label{e:init}
\mathrm P(R_1=x_{n+1},...,&R_s=x_{n+s}\,|\,S=(x_1,...,x_n))\nonumber\\
&=\begin{cases}
\prod_{j=n+1-s}^n p_{x_j,...,x_{j+s}} & \text{if }n\ge s\\
(\ell_{x_1,...,x_{s}}/\ell_{x_1,...,x_n} )\prod_{j=1}^n p_{x_j,...,x_{j+s}} & \text{if }n< s
\end{cases}
\end{align}
where $\prod_{j=1}^0\cdots$ is interpreted as 1.
\end{theorem}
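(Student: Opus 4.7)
The plan is to derive the full joint distribution of $(R^{(s)}_1,\ldots,R^{(s)}_k)$ conditioned on $S$ directly from \eqref{e:fdd} and \eqref{e:MC2}, and then read off the factored Markov-chain structure. Fix $n\in\mathbb N_0$ and $(x_1,\ldots,x_n)\in\mathcal X^n$ with $c_{x_1,\ldots,x_n}\ell_{x_1,\ldots,x_n}>0$. Since $(R^{(s)}_1,\ldots,R^{(s)}_k)$ is determined by $(R_1,\ldots,R_{k+s-1})$, where $R^{(s)}_i=(R_i,\ldots,R_{i+s-1})$, equation \eqref{e:fdd} gives
\[\mathrm P(R_1=y_1,\ldots,R_{k+s-1}=y_{k+s-1}\,|\,S)=\ell_{x_1,\ldots,x_n,y_1,\ldots,y_{k+s-1}}/\ell_{x_1,\ldots,x_n}\]
for any $y_1,\ldots,y_{k+s-1}\in\mathcal X$, so it suffices to factor this single ratio.

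Next I would split it at level $n+s$:
\[\frac{\ell_{x_1,\ldots,x_n,y_1,\ldots,y_{k+s-1}}}{\ell_{x_1,\ldots,x_n}}=\frac{\ell_{x_1,\ldots,x_n,y_1,\ldots,y_s}}{\ell_{x_1,\ldots,x_n}}\prod_{j=s+1}^{k+s-1}\frac{\ell_{x_1,\ldots,x_n,y_1,\ldots,y_j}}{\ell_{x_1,\ldots,x_n,y_1,\ldots,y_{j-1}}}.\]
Every factor in the product has numerator index $n+j\ge n+s+1\ge s+1$, so \eqref{e:MC2} applies and gives $p_{y_{j-s},\ldots,y_j}$. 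Re-indexing by $i=j-s$ rewrites the product as $\prod_{i=1}^{k-1}p_{y_i,\ldots,y_{i+s}}=\prod_{i=1}^{k-1}P_{R^{(s)}_i,R^{(s)}_{i+1}}$ by \eqref{e:Ps}; since the transition probability depends only on the pair of consecutive states and not on $i$, the chain is time-homogeneous with transition matrix $P$.

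It remains to verify that the leading factor $\ell_{x_1,\ldots,x_n,y_1,\ldots,y_s}/\ell_{x_1,\ldots,x_n}$ matches \eqref{e:init}, using the convention $x_{n+i}=y_i$. When $n\ge s$, telescoping this ratio from level $n+1$ to level $n+s$ is legitimate at every step because each numerator index is $\ge s+1$, and \eqref{e:MC2} delivers $\prod_{k=n+1}^{n+s}p_{x_{k-s},\ldots,x_k}=\prod_{j=n+1-s}^n p_{x_j,\ldots,x_{j+s}}$ after substitution $j=k-s$. When $n<s$, the ratios $\ell_{x_1,\ldots,x_k}/\ell_{x_1,\ldots,x_{k-1}}$ with $k\le s$ are not governed by \eqref{e:MC2}, so instead I would telescope only from level $s+1$ onward,
\[\ell_{x_1,\ldots,x_{n+s}}=\ell_{x_1,\ldots,x_s}\prod_{k=s+1}^{n+s}\frac{\ell_{x_1,\ldots,x_k}}{\ell_{x_1,\ldots,x_{k-1}}}=\ell_{x_1,\ldots,x_s}\prod_{j=1}^n p_{x_j,\ldots,x_{j+s}},\]
yielding exactly $(\ell_{x_1,\ldots,x_s}/\ell_{x_1,\ldots,x_n})\prod_{j=1}^n p_{x_j,\ldots,x_{j+s}}$. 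The main obstacle is this bookkeeping in the $n<s$ regime: since \eqref{e:MC2} only constrains subdivisions from level $s+1$ onward, the non-Markovian piece $\ell_{x_1,\ldots,x_s}/\ell_{x_1,\ldots,x_n}$ cannot be reduced further and must be absorbed into the initial distribution rather than into the transition matrix.
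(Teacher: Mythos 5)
Your proposal is correct and follows essentially the same route as the paper: both compute the finite-dimensional conditional probabilities $\ell_{x_1,\ldots,x_{n+m+s}}/\ell_{x_1,\ldots,x_n}$ via \eqref{e:fdd}, telescope from level $\max\{n,s\}+1$ onward using \eqref{e:MC2}, split into the cases $n\ge s$ and $n<s$, and then read off the Markov factorization. (The only substantive detail the paper spells out that you leave implicit is the convention $0/0=0$ for handling vanishing intermediate lengths; your argument remains valid once that convention is invoked.)
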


\begin{proof} 
By \eqref{e:Ps} and \eqref{e:p-def}, $P$ is a transition matrix, since its entries are non-negative and each row sums to 1. {\color{black}For short write $\ell_{i:j}=\ell_{x_i,...,x_j}$, $p_{i:j}=p_{x_i,...,x_j}$, $c_{i:j}=c_{x_i,...,x_j}$, and $P_{i:j}=\prod_{k=i}^j p_{k:k+s}$ when $i\le j$. Then,
for any 
$m\in\mathbb N_0$ and $(x_{1},...,x_{n+m+s})\in\mathcal X^{n+m+s}$ with
$c_{1:n}\ell_{1:n}>0$,
\begin{align}
&\mathrm P(R_1=x_{n+1},..., R_{m+s}=x_{n+m+s}\,|\,S=(x_1,...,x_n))\nonumber\\
=&\,{\ell_{1:n+m+s}}/{\ell_{1:n}}\label{e:mm}\\
=&\,
\begin{cases}
P_{n+1-s:m+n} & \text{if }n\ge s\\
(\ell_{1:s}/\ell_{1:n})P_{1:n+m}  &\text{if }n< s
\end{cases}\nonumber\\
=&\,
\begin{cases}
P_{n+1-s:n}P_{n+1:n+m}&\text{if }n\ge s\\
({\ell_{1:{s}}}/{\ell_{1:n}})
P_{1:n}P_{n+1:n+m}
&\text{if }n< s
\end{cases}\label{e:shit}
\end{align}
where} \eqref{e:mm} is obtained from \eqref{e:fdd}, $0/0=0$, and \eqref{e:shit} follows from \eqref{e:MC2} and \eqref{e:p-def}. 
From \eqref{e:shit} with $m=0$ follows \eqref{e:init}. If $m>0$, then
\begin{align*}
&\mathrm P(R_{m+s}=x_{n+m+s}\,|\,S=(x_1,...,x_n),R_1=x_{n+1},..., R_{m+s-1}=x_{n+m+s-1})\\
=&\,\frac{\mathrm P(R_1=x_{n+1},..., R_{m+s}=x_{n+m+s}\,|\,S=(x_1,...,x_n))}{\mathrm P(R_1=x_{n+1},..., R_{m+s-1}=x_{n+m+s-1}\,|\,S=(x_1,...,x_n))} \\
=&\, {\color{black}p_{{n+m}:{n+m+s}}} 
\end{align*}
where \eqref{e:shit} is used in the last equality.
So $R^{(s)}\,|\,S=(x_1,...,x_n)$ is a time-homogeneous Markov chain with transition matrix $P$. 
\end{proof}



The following will be needed in Corollary~\ref{c:inv-PDF} below.  


To any 
PMF $\pi=(\pi_{x_1,...,x_s})_{(x_1,...,x_s)\in\Omega}$ corresponds a PDF $f_\pi$ on $H$ which is defined for every $x=.x_1x_2...\in H$ by
\begin{equation}\label{e:finv}
f_{\pi}(x)= \begin{cases}
{\pi_{x_1,...,x_s}}/{\ell_{x_1,...,x_s}}&\text{if }(x_1,...,x_s)\in\Omega\\
0&\text{otherwise}.
\end{cases}
\end{equation}
Clearly, $f_\pi$ is LSC on $H$. Conversely, from any PDF $f$ on $H$ which is constant almost everywhere on each interval $I_{x_1,...,x_s}$ with $(x_1,...,x_s)\in \Omega$, one obtains a PMF $\pi$ on $\Omega$ such that \eqref{e:finv} is satisfied with $f=f_\pi$. 

Assume \eqref{e:MC2} and $X\sim f_\pi$.  
Then $f_\pi$ is shift invariant if and only if
$\pi$ is invariant with respect to $P$ (or for short $P$-invariant)  
which means that for every $(x_1,...,x_s)\in\Omega$,
\begin{equation}\label{e:inv}
\sum_{k:\,(k,x_1,....,x_{s-1})\in\Omega}\pi_{k,x_1,....,x_{s-1}}p_{k,x_1,...,x_s}=\pi_{x_1,...,x_s}.
\end{equation} 
For $s=1$, this is interpreted as $\sum_{k\in\mathcal X}\pi_k p_{k,x_1}=\pi_{x_1}$. 
 
\begin{corollary}\label{c:inv-PDF}
Assume \eqref{e:MC2}
 and $X\sim f_\pi$ where $\pi$ is $P$-invariant. 
 Then $N\le s$ almost surely and $X^{(s)}$
 is a stationary Markov chain with initial distribution $(X_1,...,X_s)\sim\pi$  and transition matrix $P$.
Conditioned on $(X_1,...,X_s)$, one has $N\perp\! \! \!\perp X_{>s}$.
\end{corollary}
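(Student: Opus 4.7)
The plan is to establish the three assertions in turn.

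First, I would prove $N\le s$ almost surely by applying Corollary~\ref{c:first-cor} with $n=s$: by definition \eqref{e:finv}, $f_\pi$ is constant on each $I_{x_1,\ldots,x_s}$ with $(x_1,\ldots,x_s)\in\Omega$ and vanishes on the remaining (Lebesgue-null) intervals, so condition \eqref{e:step} is satisfied for $n=s$.

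Second, I would compute the joint distribution of $(X_1,\ldots,X_n)$ for any $n\ge s$ by integrating $f_\pi$ over $I_{x_1,\ldots,x_n}$. Since $f_\pi$ is constant on this interval,
\[\mathrm P(X_1=x_1,\ldots,X_n=x_n)=\pi_{x_1,\ldots,x_s}\,\ell_{x_1,\ldots,x_n}/\ell_{x_1,\ldots,x_s}=\pi_{x_1,\ldots,x_s}\prod_{i=s+1}^{n}p_{x_{i-s},\ldots,x_i},\]
where the second equality comes from iterating \eqref{e:MC2} via \eqref{e:p-def}. Setting $n=s$ yields $(X_1,\ldots,X_s)\sim\pi$; taking ratios of this formula for consecutive $n\ge s$ gives $\mathrm P(X_{n+1}=x_{n+1}\,|\,X_1,\ldots,X_n)=p_{x_{n+1-s},\ldots,x_{n+1}}$, which by \eqref{e:Ps} is precisely the $P$ transition probability from $X^{(s)}_{n-s+1}$ to $X^{(s)}_{n-s+2}$. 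Hence $X^{(s)}$ is a time-homogeneous Markov chain with transition matrix $P$ and initial distribution $\pi$; stationarity then follows from the assumed $P$-invariance of $\pi$, cf.\ \eqref{e:inv}.

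Third, for the conditional independence, I would observe that for any $x=.x_1x_2\ldots\in H$ with $(x_1,\ldots,x_s)\in\Omega$ and any $n\in\{0,\ldots,s\}$, combining \eqref{e:res1} and \eqref{e:finv} gives
\[\mathrm P(N=n\,|\,X=x)=c_{x_1,\ldots,x_n}\,\ell_{x_1,\ldots,x_s}/\pi_{x_1,\ldots,x_s},\]
which depends on $x$ only through $(x_1,\ldots,x_s)$, since by definition $c_{x_1,\ldots,x_n}$ is a function of $(x_1,\ldots,x_n)$ alone and $n\le s$. Combined with $\mathrm P(N\le s)=1$, this shows that the conditional law of $N$ given $X$ factors through $(X_1,\ldots,X_s)$, and a standard tower-property argument then yields the claimed conditional independence of $N$ and $X_{>s}$ given $(X_1,\ldots,X_s)$. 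The main obstacle is essentially notational bookkeeping: correctly matching the indices when passing from $\mathrm P(X_{n+1}\,|\,X_1,\ldots,X_n)$ to the state-process transition in \eqref{e:Ps}, and tracking that for $n\le s$ the quantity $c_{x_1,\ldots,x_n}$ involves only the first $s$ digits. No tools beyond Theorem~\ref{thm1}, Corollary~\ref{c:first-cor}, and the invariance condition \eqref{e:inv} are required.
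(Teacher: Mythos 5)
Your proposal is correct, but it takes a somewhat different route from the paper's. The paper's proof leans on Theorem~\ref{t:MC}: it observes that, conditioned on $(N,X_1,\ldots,X_s)$, the process $X^{(s)}$ is a Markov chain with transition matrix $P$ and the conditional law of $X_{>s}$ depends only on $(X_1,\ldots,X_s)$ and not on $N$; the conditional independence of $N$ and $X_{>s}$ given $(X_1,\ldots,X_s)$ is then read off from that, and stationarity follows since $(X_1,\ldots,X_s)\sim\pi$ is $P$-invariant. You instead bypass Theorem~\ref{t:MC} entirely: you compute the finite-dimensional laws of $(X_1,\ldots,X_n)$ directly by integrating $f_\pi$ and iterating \eqref{e:MC2}, which establishes the Markov chain structure and initial law by hand; and for the conditional independence you work with the \emph{other} conditional -- showing via \eqref{e:res1} and \eqref{e:finv} that $\mathrm P(N=n\,|\,X=x)$ is a function of $(x_1,\ldots,x_s)$ alone when $n\le s$, then applying the tower property. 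Both factorisation criteria for conditional independence are valid; yours is essentially the mirror image of the paper's. What the paper's route buys is reuse of Theorem~\ref{t:MC} (less repeated computation); what your route buys is a self-contained argument that avoids the slightly delicate ``conditioned on $S$ and on those $R_i$ with $N<i\le s$'' reformulation in the paper, at the cost of redoing the product formula for $\ell_{x_1,\ldots,x_n}/\ell_{x_1,\ldots,x_s}$. One small remark: you should note that $f_\pi$ is LSC on $H$ by construction (stated just after \eqref{e:finv}), so that \eqref{e:LSC} is automatically available before you invoke Corollary~\ref{c:first-cor} and \eqref{e:res1}.
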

 
\begin{proof} 
By \eqref{e:finv}, from $X\sim f_{\pi}$ follows $(X_1,...,X_s)\sim\pi$, and  $N\le s$ almost surely, cf.\ Corollary~\ref{c:first-cor}. By Theorem~\ref{t:MC}, conditioned on both $N$ and $(X_1,...,X_s)$, or equivalently, conditioned on $S$ and on those (if any) $R_i$ with $N< i\le s-N$, $X^{(s)}$
is a Markov chain with transition matrix $P$ and
the distribution of $X_{>s}$
depends only on $(X_1,...,X_s)$ (and not on $N$). Therefore, conditioned on $(X_1,...,X_s)$, one has $N\perp\! \! \!\perp X_{>s}$,  
and since $(X_1,...,X_s)\sim\pi$, the distribution of the chain $X^{(s)}=\{X^{(s)}_i\}_{i=1,2,...}$ is invariant under left shifts. The proof is completed.
\end{proof} 
 
 It is well-known that there exists a unique $P$-invariant PMF if and only if $P$ is irreducible. 
 The notation $\pi_{\mathrm{inv}}$ will be used to indicate that this is the unique $P$-invariant PMF. Letting $f_{\mathrm{inv}}=f_{\pi_{\mathrm{inv}}}$, one gets: 

\begin{corollary}\label{c:stat-inv-PDF}
Assume \eqref{e:MC2} and  
$P$ is irreducible. 
Then $f_{\mathrm{inv}}$ is the unique shift invariant PDF (up to equivalence of PDFs).
\end{corollary}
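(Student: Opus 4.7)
The plan is to reduce both existence and uniqueness to the equivalence stated immediately before Corollary~\ref{c:inv-PDF}: under \eqref{e:MC2}, the PDF $f_\pi$ is shift invariant if and only if $\pi$ is $P$-invariant. For existence, irreducibility of $P$ yields the unique $P$-invariant PMF $\pi_{\mathrm{inv}}$, and then $f_{\mathrm{inv}}=f_{\pi_{\mathrm{inv}}}$ is shift invariant.

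For uniqueness, let $g$ be an arbitrary shift invariant PDF satisfying \eqref{e:LSC}. The crux is to show that $g$ is, up to a.e.\ equivalence, of the form $f_\pi$ for some PMF $\pi$ on $\Omega$; equivalently, that $g$ is constant a.e.\ on each non-empty $I_{x_1,\ldots,x_s}$. Once this is in hand, shift invariance of $g=f_\pi$ forces $\pi$ to be $P$-invariant by the equivalence cited above, whence $\pi=\pi_{\mathrm{inv}}$ by irreducibility, so that $g=f_{\mathrm{inv}}$ a.e.

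To establish the constancy, I would fix $(x_1,\ldots,x_s)\in\Omega$ and $n\ge s$, and compute $\mathrm P(X_{k+1}=x_1,\ldots,X_{k+n}=x_n)$ under $X\sim g$ in two ways. Stationarity makes this probability independent of $k$ and equal to $\int_{I_{x_1,\ldots,x_n}}g(x)\,\mathrm dx$. On the other hand, decomposing on $\{N\le k\}\cup\{N>k\}$ and using Theorem~\ref{t:MC} together with the telescoping identity $\prod_{j=1}^{n-s}p_{x_j,\ldots,x_{j+s}}=\ell_{x_1,\ldots,x_n}/\ell_{x_1,\ldots,x_s}$ (obtained by iterating \eqref{e:MC2}), one has
\[\mathrm P(X_{k+1}=x_1,\ldots,X_{k+n}=x_n,\,N\le k)=\mathrm P(X_{k+1}=x_1,\ldots,X_{k+s}=x_s,\,N\le k)\cdot\frac{\ell_{x_1,\ldots,x_n}}{\ell_{x_1,\ldots,x_s}}.\]
Because $N<\infty$ a.s., $\mathrm P(N>k)\to 0$, and letting $k\to\infty$ while reapplying stationarity yields $\int_{I_{x_1,\ldots,x_n}}g=\pi_{x_1,\ldots,x_s}\cdot\ell_{x_1,\ldots,x_n}/\ell_{x_1,\ldots,x_s}$ with $\pi_{x_1,\ldots,x_s}=\int_{I_{x_1,\ldots,x_s}}g$. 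Hence the average of $g$ on $I_{x_1,\ldots,x_n}$ equals the constant $\pi_{x_1,\ldots,x_s}/\ell_{x_1,\ldots,x_s}$ independently of $n$, and Lebesgue differentiation (valid since $\ell_{x_1,\ldots,x_n}\to 0$ by \eqref{e:convergence}) gives $g=\pi_{x_1,\ldots,x_s}/\ell_{x_1,\ldots,x_s}$ a.e.\ on $I_{x_1,\ldots,x_s}$.

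I expect the main obstacle to be this conditional decomposition step. The marginal law of $S$ is genuinely $g$-dependent through the $c$-coefficients of Theorem~\ref{thm1}, whereas $R\,|\,S$ is universal (Theorem~\ref{t:MC}); it is precisely the asymptotic negligibility $\mathrm P(N>k)\to 0$ that allows the $g$-dependence on the left to be absorbed into the single factor $\pi_{x_1,\ldots,x_s}$ in the limit, decoupling the long-range Markov structure from the initial transient and making the averaging identity exact.
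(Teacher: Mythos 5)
Your proof is correct and, in fact, supplies a step that the paper leaves implicit. The paper gives no separate proof of this corollary: it relies on the equivalence stated just before Corollary~\ref{c:inv-PDF} (namely, $f_\pi$ is shift invariant iff $\pi$ is $P$-invariant) together with the standard fact that irreducibility of $P$ yields a unique $P$-invariant PMF $\pi_{\mathrm{inv}}$. That chain of reasoning only establishes uniqueness within the class of PDFs of the form $f_\pi$, i.e.\ those that are a.e.\ constant on the level-$s$ intervals; it does not by itself rule out a shift invariant PDF not of that form. (The remark after the corollary connecting to ergodicity of $T$ w.r.t.\ Lebesgue measure signals that the author knows a more general argument is available, but it is not given in the general Markovian setting.)

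What you do differently is precisely to close that gap: you fix an arbitrary shift invariant PDF $g$ (satisfying \eqref{e:LSC}, which is a standing assumption throughout Section~\ref{s:Markov}) and show it must be constant a.e.\ on each $I_{x_1,\ldots,x_s}$. Your key identity
\[\mathrm P(X_{k+1}=x_1,\ldots,X_{k+n}=x_n,\,N\le k)=\mathrm P(X_{k+1}=x_1,\ldots,X_{k+s}=x_s,\,N\le k)\cdot\frac{\ell_{x_1,\ldots,x_n}}{\ell_{x_1,\ldots,x_s}}\]
is correct: conditioning on $S=(y_1,\ldots,y_m)$ with $m\le k$, \eqref{e:fdd} and the telescoping of \eqref{e:MC2} from position $k+s+1$ to $k+n$ give $\ell_{y_1,\ldots,y_m,z_1,\ldots,z_{k-m},x_1,\ldots,x_n}=\ell_{y_1,\ldots,y_m,z_1,\ldots,z_{k-m},x_1,\ldots,x_s}\cdot\ell_{x_1,\ldots,x_n}/\ell_{x_1,\ldots,x_s}$, uniformly in the summed-over $z$'s, and then one sums out $S$. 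Sending $k\to\infty$ uses $\mathrm P(N>k)\to 0$ and stationarity on both sides, after which the martingale/Lebesgue differentiation step (legitimate since $\ell_{x_1,\ldots,x_n}\to 0$ by \eqref{e:convergence}) pins down $g$ as $f_\pi$ with $\pi_{x_1,\ldots,x_s}=\int_{I_{x_1,\ldots,x_s}}g$. The remainder is exactly the paper's argument. So the two approaches agree on the final step; your contribution is the explicit reduction of an arbitrary LSC shift invariant PDF to the $f_\pi$ form, which the paper omits, and this is where the Markov-chain machinery of Theorem~\ref{t:MC} and the finiteness of $N$ do real work.
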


The condition in Corollary~\ref{c:stat-inv-PDF} that $P$ is irreducible translates by \eqref{e:MC2} into the condition that for any distinct $(x_1,...,x_s),(y_1,...,y_s)\in\Omega$, there exist $s+k-1\ge1$ elements 
$x_{s+1},...,x_{2s+k-1}\in\mathcal X$ with $x_{s+k}=y_1,...,x_{2s+k-1}=y_s$ and $\ell_{x_1,...,x_{s+1}}>0, ...,\ell_{x_{s+k-1},...,x_{2s+k-1}}>0$. 
In the setting of a dynamical system (Section~\ref{s:1.2}), Corollary~\ref{c:stat-inv-PDF} is in line with the well-known result that if $T$ is ergodic with respect to Lebesgue measure and $f$ specifies a $T$-invariant distribution, then $f$ specifies the unique $T$-invariant distribution (in fact, in all specific examples of $T$ given in this paper, $T$ is ergodic with respect to Lebesgue measure, see e.g.\ \cite{Karma,Fritz}). 

\be In continuation of Example~\ref{ex:2} when $\beta$ is the pseudo golden mean of order $s+1$, it follows by Theorem~\ref{t:MC} that $R^{(s)}\,|\,S$ is a Markov chain. Since the transition probabilities are given by \eqref{e:Pex}, the chain is seen to be
 irreducible. Thus, by Corollary~\ref{c:stat-inv-PDF}, $f_{\mathrm{inv}}$ specifies the unique shift invariant PDF or equivalently 
$T$-invariant  probability measure where $T(x)=\beta x-\lfloor\beta x\rfloor$. In fact, for any value of $\beta>1$,
 Gelfond \cite{gelfond} and Parry \cite{parry} have independently obtained the $T$-invariant distribution such that the corresponding PDF can be derived in closed form. 
\ee

\subsection{Coupling construction between $(X,N)$ and $(M_1,M_2)$}\label{s:Markov2}

Some notation is needed for the following theorem. 
For every $z\in\Omega$ and $i\in\mathbb N$, let $Y_i^{(s)}(z)=(Y_i(z),...,Y_{i+s-1}(z))$  and suppose $Y^{(s)}(z)=\{Y_i^{(s)}(z)\}_{i=1,2,...}$ is a Markov chain on $\Omega$ with transition matrix $P$ and $Y_1^{(s)}(z)$ following the PMF given by row $z$ in $P$. 
For $i\in\mathbb N$, denote 
$P_i^{(s)}(z)$ the distribution of $Y_i^{(s)}(z)$, and
 $P^{i}$ the $i$-step transition matrix of $Y^{(s)}(z)$, i.e., $P^{i}$ is the product of $P$ with itself $i$-times. Let $P^{i}_{z,w}$ be entry $(z,w)$ of $P^{i}$ and $\|\cdot\|_{\mathrm{TV}}$ the total variation distance for signed finite measures on $\Omega$.
Recall that uniform geometric ergodicity of a Markov chain with transition matrix $P$ means that 
there is a unique invariant PMF $\pi_{\mathrm{inv}}$ with respect to $P$ 
and for some constants $\alpha$ and $\beta<1$ and for all $(z,i)\in\Omega\times\mathbb N$,
\begin{equation}\label{e:unifergodicity}
\|P_i^{(s)}(z)-\Pi_{\mathrm{inv}}\|_{\mathrm{TV}}=\frac12\sum_{w\in\Omega}|P^{i}_{z,w}-\pi_{\mathrm{inv}}(w)|\le \alpha\beta^i
\end{equation}
where $\Pi_{\mathrm{inv}}$ is the  probability measure on $\Omega$ with PMF $\pi_{\mathrm{inv}}$.
 If $\Omega$ is finite, then uniform geometric ergodicity is equivalent to irreducibility and aperiodicity of the chain. When $\Omega$ is countable, see
Section 16.2.1 in \cite{Meyn} for conditions of
uniform geometric ergodicity. These criteria of uniform geometric ergodicity translate into conditions on the lengths of the intervals at level $s+1$. This was illustrated in the remark after the proof of Corollary~\ref{c:stat-inv-PDF} when discussing irreducibility. Aperiodicity holds if e.g.\ $\ell_{x_1,...,x_{2s}}>0$ and $\ell_{x_1,...,x_{2s+1}}>0$ for all $(x_1,...,x_{2s+1})\in\mathcal X^{2s+1}$.

\begin{theorem}\label{t:3}
Assume \eqref{e:LSC}, \eqref{e:MC2}, and
\eqref{e:unifergodicity}, and let $\pi_{\mathrm{inv}}$ be the unique $P$-invariant PMF and $f_{\mathrm{inv}}$  the corresponding shift invariant PDF. 
 Then there is a time $t\in\mathbb N$, a positive probability $\epsilon_t>0$ (given by \eqref{e:epst} below), and a coupling of $(X,N)$ with 
 two independent geometrically distributed random variables $M_1$ and $M_2$ with the same mean $1/\epsilon_t$ and support $\mathbb N$ such that the following items (A)--(C) hold. Let $b=t+s-1$, $M=b(M_1+M_2-1)$, and $K=\max\{N,s\}+M-s$. Then: 
\begin{enumerate}
\item[(A)] $(X_1,...,X_{\max\{N,s\}},N)$ is independent of both $X^{[K]}\sim f_{\mathrm{inv}}$ and  $X^{(s)}_{>K}$ which is a stationary Markov chain with initial PMF $\pi_{\mathrm{inv}}$ and transition matrix $P$.
\item[(B)] For any $n\in\mathbb N_0$, conditioned on $K\le n$, one has $X^{[n]}\sim f_{\mathrm{inv}}$. 
\item[(C)] $(X_1,...,X_{\max\{N,s\}},N)\perp\! \! \!\perp (M_1,M_2)$.
\end{enumerate} 
\end{theorem}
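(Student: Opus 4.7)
The plan is to apply Wilson's read-once coupling-from-the-past construction~\cite{wilson} to the Markov chain $R^{(s)}\mid S$ provided by Theorem~\ref{t:MC}. Uniform geometric ergodicity~\eqref{e:unifergodicity} lets one choose $t\in\mathbb N$ large enough that
\[
\epsilon_t:=\inf_{z\in\Omega}\sum_{w\in\Omega}\min\bigl(P^{t}_{z,w},\,\pi_{\mathrm{inv}}(w)\bigr)
\]
is strictly positive, since $\epsilon_t\ge 1-\alpha\beta^{t}$. This yields the standard minorization $P^{t}_{z,w}=\epsilon_t\pi_{\mathrm{inv}}(w)+(1-\epsilon_t)Q_{z,w}$ with $Q$ a stochastic kernel on $\Omega$.

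Conditionally on $(S,N)$, by Theorem~\ref{t:MC} the process $R^{(s)}$ is Markov with transition $P$, so I would group its transitions into successive \emph{blocks} of $t$ $s$-tuple steps each, aligned so that block $k$ advances the digit index by a deterministic amount linked to $b=t+s-1$. To each block I attach an independent coin $B_k\sim\mathrm{Bern}(\epsilon_t)$, sampled from fresh randomness independent of $(X,N)$: when $B_k=1$ the end-of-block $s$-tuple is drawn from $\pi_{\mathrm{inv}}$ (a \emph{regeneration}), and when $B_k=0$ from $Q(z_{k-1},\cdot)$, with intermediate $s$-tuples filled in consistently with the kernel $P$. Setting
\[
M_1=\min\{k\ge 1:B_k=1\},\qquad M_2=\min\{k\ge 1:B_{M_1+k}=1\},
\]
the strong memorylessness of the i.i.d.\ Bernoulli sequence implies that $M_1$ and $M_2$ are independent and $\mathrm{Geom}(\epsilon_t)$ on $\mathbb N$, and both are independent of $(X,N)$ and hence of $(X_1,\ldots,X_{\max\{N,s\}},N)$, which gives~(C).

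The crux is the read-once property. At the first regeneration the $s$-tuple at the end of block $M_1$ is drawn afresh from $\pi_{\mathrm{inv}}$, so conditionally on $\{M_1=m\}$ it is $\pi_{\mathrm{inv}}$-distributed and independent of $(X_1,\ldots,X_{\max\{N,s\}},N)$ and of the coins prior to block $m$; marginally it is therefore $\pi_{\mathrm{inv}}$-distributed and independent of $(X_1,\ldots,X_{\max\{N,s\}},N,M_1)$. Running the chain forward from this regeneration under stationarity and invoking the same read-once argument between the first and second regenerations (across blocks $M_1+1,\ldots,M_1+M_2$), the $s$-tuple $X^{(s)}_{K+1}$ at time $K=\max\{N,s\}+b(M_1+M_2-1)-s$ is $\pi_{\mathrm{inv}}$-distributed and independent of $(X_1,\ldots,X_{\max\{N,s\}},N)$. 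The tail $X^{(s)}_{>K}$ then evolves as a stationary Markov chain with transition $P$, and by~\eqref{e:finv} one has $X^{[K]}\sim f_{\mathrm{inv}}$, which proves~(A). Item~(B) is then obtained by conditioning on $\{K\le n\}$ and using that a stationary Markov chain remains stationary past any deterministic future time.

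The main obstacle I anticipate is the combinatorial bookkeeping around the offset $\max\{N,s\}$ and the precise formula for $K$. One has to handle the case $N<s$, where the initial $s$-tuple $X^{(s)}_1$ straddles the observed $S$ and the residual $R$, and one must reconcile the $s$-tuple step count with the digit indexing so that the factor $b=t+s-1$ and the $-s$ correction come out correctly. A further subtlety is why \emph{both} regenerations appear rather than only the first: the second regeneration supplies the read-once stopping signal that must be observed, and $M_2$ is structured so that the output state at $K$ is independent of the very information that certifies termination. Once this accounting is pinned down, the joint independence claims follow directly from the conditional independence built into the Bernoulli minorization.
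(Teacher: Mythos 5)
Your proposal reaches for the right conclusion but uses a construction that is genuinely different from the paper's, and the difference matters.

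First, your $\epsilon_t$ is not the $\epsilon_t$ of the theorem. You define it via the minorization $\epsilon_t=\inf_{z}\sum_w\min(P^t_{z,w},\pi_{\mathrm{inv}}(w))$, which is the Nummelin regeneration constant. The theorem, however, explicitly fixes $\epsilon_t$ by \eqref{e:epst}, namely the probability that a block $B(V_1,\dots,V_b)$ of Wilson's read once algorithm is \emph{coalescent}, i.e.\ that the map $z\mapsto Y^{(s)}_t(z)$ built from the shared innovations $V_1,\dots,V_b$ collapses to a single point. These two probabilities do not coincide in general, so the random times $M_1,M_2$ you construct do not have the distribution the theorem asserts.

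Second, and more fundamentally, your own argument shows that in a Nummelin splitting the \emph{first} regeneration already produces a fresh $\pi_{\mathrm{inv}}$-draw independent of the past and of $M_1$. That makes $M_2$ superfluous in your construction, and you visibly sense this in your closing paragraph, where the role of the ``second regeneration'' is left unexplained. The paper needs $M_2$ for an entirely different reason: in Wilson's read once algorithm the common output of the first coalescent block is \emph{not} distributed as $\pi_{\mathrm{inv}}$ (it is biased toward states that are easy to coalesce into). The second coalescent block, together with the specific readout at time $M=b(M_1+M_2-1)$, is exactly what corrects this bias; this is the content of Lemma~\ref{l:3OLD}, which the paper cites from \cite{wilson} and \cite{Foss}. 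Your attempt to transplant that two-epoch structure onto the regeneration picture is an unmotivated appendage rather than a needed ingredient.

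The paper's actual proof is short precisely because it outsources the hard part: it generates $(X_1,\dots,X_{\max\{N,s\}},N)$ from Theorem~\ref{thm1}, then generates $X_{>\max\{N,s\}}$ through the stochastic recursive sequence \eqref{e:srs11} using fresh i.i.d.\ innovations $V_1,V_2,\dots$ that are independent of $(X_1,\dots,X_{\max\{N,s\}},N)$, and applies Wilson's read once algorithm to the $V$'s. Lemma~\ref{l:3OLD} then gives $X^{(s)}_{\max\{N,s\}+M-s+1}=O\sim\pi_{\mathrm{inv}}$; independence of $O$ and $(M_1,M_2)$ from $(X_1,\dots,X_{\max\{N,s\}},N)$ follows because the $V$'s are fresh; and stationarity past $K$ comes from Corollaries~\ref{c:inv-PDF} and \ref{c:stat-inv-PDF}. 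If you wish to keep the regeneration route, you would have to restate the theorem with your own $\epsilon_t$ and drop $M_2$ (proving a formally different, arguably cleaner, statement); to prove the theorem \emph{as stated}, you must build the coupling from the coalescence events of shared-randomness blocks and invoke the read once correctness, not from a minorization.
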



Theorem~\ref{t:3} and the following corollary are proved and exemplified in Section~\ref{s:proof}. 

\begin{corollary}\label{c:coupling2}
Assume 
\eqref{e:LSC}, \eqref{e:MC2}, and
\eqref{e:unifergodicity}, let $f_{\mathrm{inv}}$ and $K$ be as in Theorem~\ref{t:3}, and let
$\mu_{\mathrm{inv}}$ be the probability measure with PDF $f_{\mathrm{inv}}$. Then there is a coupling between $(X,K)$ and a random variable $Z\sim\mu_{\mathrm{inv}}$ such that $X^{[n]}=Z^{[n]}$ when $K+s\le n$, and so
\begin{equation}\label{e:slut}
\|Q^{[n]}-\mu_{\mathrm{inv}}\|_{\mathrm{TV}}\le \mathrm P(K>n)\to0\quad\mbox{as }n\to\infty.
\end{equation}
\end{corollary}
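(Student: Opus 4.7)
The plan is to emulate the proof of Corollary~\ref{c:coupling1}, replacing the IID tail by a stationary order-$s$ Markov chain. First I would invoke Theorem~\ref{t:3}(A) to place us on a probability space where $X^{(s)}_{>K}$ is a stationary Markov chain with transition matrix $P$ and $s$-block marginal $\pi_{\mathrm{inv}}$, independent of $(X_1,\ldots,X_{\max\{N,s\}},N)$; equivalently, the digit tail $(X_{K+1},X_{K+2},\ldots)$ is distributed exactly as the digits of a $\mu_{\mathrm{inv}}$-variable starting from position~$1$.

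Next I would construct $Z$ on this same space by extending the stationary tail leftwards. Using the time-reversed transition kernel $\tilde P_{y,z}=\pi_{\mathrm{inv}}(z)P_{z,y}/\pi_{\mathrm{inv}}(y)$, which under $\pi_{\mathrm{inv}}$ is the reverse of $P$ and shifts one new digit in at each step, I would sample $Z^{(s)}_{K},Z^{(s)}_{K-1},\ldots,Z^{(s)}_{1}$ backwards starting from the state $X^{(s)}_{K+1}$, set $Z_i:=X_i$ for every $i>K$, and define $Z:=.Z_1Z_2\ldots$. By construction the full digit sequence $(Z_1,Z_2,\ldots)$ is a stationary order-$s$ Markov chain with transition $P$ and $s$-block marginal $\pi_{\mathrm{inv}}$, so Corollary~\ref{c:inv-PDF} applied with $\pi=\pi_{\mathrm{inv}}$ gives $Z\sim f_{\mathrm{inv}}=\mu_{\mathrm{inv}}$.

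Since $Z_i=X_i$ for every $i>K$, one obtains $Z^{[n]}=X^{[n]}$ on $\{K+s\le n\}$ (in fact on the larger event $\{K\le n\}$), and the coupling inequality (see e.g.\ \cite{thorisson}) then yields
\[
\|Q^{[n]}-\mu_{\mathrm{inv}}\|_{\mathrm{TV}}\le\mathrm P(Z^{[n]}\ne X^{[n]})\le\mathrm P(K>n).
\]
Almost sure finiteness of $K$ is immediate since $N<\infty$ a.s.\ (Theorem~\ref{thm1}) and $M_1,M_2$ are geometric on $\mathbb N$ (Theorem~\ref{t:3}), so $\mathrm P(K>n)\to0$ as $n\to\infty$.

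The main technical obstacle I expect is verifying rigorously that this backward extension, performed at the random time $K$, genuinely yields a $Z$ with distribution $\mu_{\mathrm{inv}}$. This rests on the strong independence stated in Theorem~\ref{t:3}(A) between the prefix $(X_1,\ldots,X_{\max\{N,s\}},N)$ and the stationary tail $X^{(s)}_{>K}$, together with the standard reversibility under $\pi_{\mathrm{inv}}$ of the chain with kernel $P$; once that is in place, everything else amounts to routine bookkeeping with the coupling inequality.
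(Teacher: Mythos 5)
Your construction --- set $Z_i:=X_i$ for $i>K$, time-reverse the stationary tail $X^{(s)}_{>K}$ backward over the remaining $K$ indices to produce $Z_1,\ldots,Z_K$, and then apply the coupling inequality --- is exactly the route the paper's terse proof has in mind (``along similar lines as in the proof of Corollary~\ref{c:coupling1}''). You also correctly note that your construction yields $Z^{[n]}=X^{[n]}$ on the larger event $\{K\le n\}$, which is precisely what makes the bound $\mathrm P(K>n)$ in \eqref{e:slut} come out rather than the weaker $\mathrm P(K+s>n)$.

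The obstacle you flag at the end is genuine, but the justification you sketch for clearing it does not address it. Independence of $X^{(s)}_{>K}$ from the prefix $(X_1,\ldots,X_{\max\{N,s\}},N)$ is not what the backward extension needs: what it needs is that, \emph{conditionally on $K$}, the tail $X^{(s)}_{>K}$ is still a stationary chain with $s$-block marginal $\pi_{\mathrm{inv}}$, so that reversing it yields a chain that is stationary on all of $\{1,2,\ldots\}$. Since $K=\max\{N,s\}+M-s$ is a function of $N$ and $(M_1,M_2)$, and Theorem~\ref{t:3} asserts only independence of the tail from the prefix (part~(A)) and independence of $(M_1,M_2)$ from the prefix (part~(C)) --- never $(M_1,M_2)\perp\! \! \!\perp X^{(s)}_{>K}$ --- the marginal statement in (A) does not by itself license extending backward at the random time $K$. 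If the law of the read-once output were allowed to depend on $M_2$, the conditional law of $X^{(s)}_{K+1}$ given $K$ could differ from $\pi_{\mathrm{inv}}$ and the backward-extended $Z$ would then fail to have law $\mu_{\mathrm{inv}}$. The paper's own three-line proof shares this terseness: it implicitly appeals to a stronger consequence of Wilson's read-once construction, namely that the post-output chain remains stationary even conditionally on the running time $(M_1,M_2)$. To close the gap cleanly you should extract and prove that property --- for example, that the segment $X^{(s)}_{>K}$ is independent of $(M_1,M_2)$ --- rather than relying on prefix-independence, which is not sufficient.
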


Theorem~\ref{t:3} and Corollary~\ref{c:coupling2} are discussed together with Corollaries~\ref{c:Luroth} and \ref{c:coupling1} in Section~\ref{s:f2}.

\subsection{Read once algorithm}\label{s:read}

The proof in Section~\ref{s:proof} of Theorem~\ref{t:3} is based on the read once algorithm developed by Wilson  \cite{wilson}. The algorithm is stated below for the setting used in the present paper.

The Markov chain $Y^{(s)}(z)$ can be specified as a stochastic recursive sequence (SRS) so that
\begin{equation}\label{e:srs11}
Y_{i}(z)=g(Y_{i-s}(z),...,Y_{i-1}(z),V_{i})
,\quad i=1,2,...,
\end{equation}
where $V_1,V_2,...$ are IID random variables, $(Y_{1-s}(z),...,Y_0(z))=z$,
and there are many possible choices of the updating function $g$ and for the distribution of $V_1$: Without loss of generality, assume $\mathcal X\subset\mathbb R$ is countable, and for specificity, let $V_1\sim\mathrm{Unif}(0,1)$, and for $(x_1,...,x_s)\in\Omega$, let 
\begin{equation}\label{e:inversion}
(0,1)\ni v\mapsto g(x_1,...,x_s,v)=F^-(v\,|\,x_1,...,x_s)
\end{equation} 
be the generalized inverse of the CDF
$$F(x\,|\,x_1,...,x_s)=\sum_{k:\,k\le x,\,(x_2,...,x_s,k)\in\Omega} p_{x_1,...,,x_{s},k},\quad x\in\mathbb R,$$
where $(x_2,...,x_s,k)$ is interpreted as $k$ if $s=1$. 
Note that all the chains $Y^{(s)}(z)$ with $z\in\Omega$ are coupled via $(V_1,V_2,...)$. 

For an integer $t\ge s$, letting $b=t+s-1$, then
\[B=B(V_1,...,V_b)=\{(Y_1(z),...,Y_{b}(z))\,|\,z\in\Omega\}\]
is called a block of length $b$.
The block agrees with $\{(Y^{(s)}_1(z),...,Y^{(s)}_t(z))\,|\,z\in\Omega\}$ and the notation 
$B(V_1,...,V_b)$ is used to stress that the block depends only on $V_1,...,V_b$. Let $O(z,V_1,...,V_b)=Y^{(s)}_t(z)$, which is interpreted as the output of $B$ when $z$ is the input in $B$. Call $B$  a coalescent block with common output $O(V_1,...,V_b)=O(z,V_1,...,V_b)$ if this is the same for all $z\in\Omega$.
 Let
\begin{equation}\label{e:epst}
\epsilon_t=\mathrm P(B\mbox{ is a coalescent block}),
\end{equation}
which is an increasing function of $t$.   
Suppose $\epsilon_t>0$, and consider the sequence of blocks 
\[B_i=B_i(V_{(i-1)b+1},...,V_{ib}),\quad i=1,2,...,\]
which are IID copies of $B$. Now,
Wilson's read once algorithm consists of three steps:
\begin{enumerate}
\item[(A)] Draw $B_1,...,B_{M_1},B_{M_1+1},...,B_{M_1+M_2}$ where $B_{M_1}$ and $B_{M_2}$ are the first and second coalescent blocks. Let $O_{M_1b}=O(V_{(M_1-1)b+1},...,V_{M_1b})$ be the common output of $B_{M_1}$.
\item[(B)] If $M_2\ge2$, for $i=0,...,M_2-2$, use the output $O_{(M_1+i)b}$ of $B_{M_1+i}$ as the input in the following block, which gives the output $$O_{(M_1+i+1)b}=O(O_{(M_1+i)b},V_{(M_1+i)b+1},...,V_{(M_1+i)b+b})$$
in $B_{M_1+i+1}$.
\item[(C)] Return $O=O_M$ where $M=(M_1+M_2-1)b$. 
\end{enumerate}

It follows from \cite{Foss,wilson} that the existence of a $t\in\mathbb N$ such that $\epsilon_t>0$ is equivalent to \eqref{e:unifergodicity}, and it has already been noticed that \eqref{e:unifergodicity} implies the unique existence of the invariant PMF $\pi_{\mathrm{inv}}$. Thus, 
the following lemma follows from 
\cite{wilson}. 

\begin{lemma}\label{l:3OLD}
Assume \eqref{e:MC2} and
\eqref{e:unifergodicity}, and let $\pi_{\mathrm{inv}}$ be the unique $P$-invariant PMF.
 Then 
 $O\sim\pi_{\mathrm{inv}}$.
 \end{lemma}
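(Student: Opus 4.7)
The plan is to compute the distribution of $O$ directly as an explicit series, and then match it to $\pi_{\mathrm{inv}}$ using the invariance $\pi_{\mathrm{inv}} P^t = \pi_{\mathrm{inv}}$. First, I would decompose the $t$-step transition matrix by conditioning on the coalescent event. Let $\rho(w)=\mathrm{P}(O(V_1,\ldots,V_b)=w\mid B \text{ is coalescent})$ for $w\in\Omega$; this is well-defined because when $B$ is coalescent the common output does not depend on the starting state. Setting $Q_{z,w}=\mathrm{P}(Y_t^{(s)}(z)=w\mid B \text{ is not coalescent})$, a direct conditioning yields
\[ P^t_{z,w} = \epsilon_t\,\rho(w) + (1-\epsilon_t)\,Q_{z,w}, \]
i.e.\ $P^t = \epsilon_t \mathbf{1}\rho + (1-\epsilon_t)\,Q$, where $\mathbf{1}$ is the column vector of ones and $\rho$ is viewed as a row vector.

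Second, I would compute $\mathrm{P}(O=w)$ using the algorithmic description of Section~\ref{s:read}. By the IID structure of the block sequence, $M_2$ is geometric with parameter $\epsilon_t$, independent of $M_1$ and of the common output of $B_{M_1}$, which has distribution $\rho$. Conditional on $M_2=k$, the output $O$ is obtained by starting from a sample of $\rho$ and propagating it through the $k-1$ blocks $B_{M_1+1},\ldots,B_{M_1+k-1}$, each of which is non-coalescent and independent of the others, contributing a factor of $Q$ per block. The coalescence of $B_{M_1+k}$ plays no role since $B_{M_1+k}$ is not used in forming $O$. This yields
\[ \mathrm{P}(O=w) = \sum_{k=1}^{\infty} \epsilon_t(1-\epsilon_t)^{k-1}\,(\rho Q^{k-1})(w). \]

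Third, I would apply the decomposition to the invariance $\pi_{\mathrm{inv}} P^t = \pi_{\mathrm{inv}}$. Using $\pi_{\mathrm{inv}}\mathbf{1}=1$, this gives $\pi_{\mathrm{inv}} = \epsilon_t \rho + (1-\epsilon_t)\,\pi_{\mathrm{inv}} Q$. Iterating this identity $K$ times produces
\[ \pi_{\mathrm{inv}} = \sum_{k=0}^{K-1} \epsilon_t(1-\epsilon_t)^{k}\,\rho Q^{k} + (1-\epsilon_t)^{K}\,\pi_{\mathrm{inv}} Q^{K}. \]
Since $\pi_{\mathrm{inv}} Q^{K}$ has total mass at most $1$ and $(1-\epsilon_t)^{K}\to 0$, the remainder vanishes in total variation as $K\to\infty$, so $\pi_{\mathrm{inv}}$ equals the series derived in the second step. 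Therefore $O\sim \pi_{\mathrm{inv}}$.

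The main obstacle will be justifying cleanly the independence used in the second step when conditioning on $M_2=k$: one needs to verify that this conditioning factors so that only the non-coalescence of $B_{M_1+1},\ldots,B_{M_1+k-1}$ enters the distribution of $O$, independently for each block, while the coalescent status of $B_{M_1+k}$ is an independent event. Both facts follow from the IID structure of the block sequence $B_1,B_2,\ldots$. In the countable-$\Omega$ case, termwise convergence of the geometric series in total variation is automatic since $Q$ is sub-stochastic and $0<1-\epsilon_t<1$.
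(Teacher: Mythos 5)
Your proof is correct and, importantly, more self-contained than the paper's: the paper establishes Lemma~\ref{l:3OLD} only by citing Wilson \cite{wilson} (via \cite{Foss}), noting that $\epsilon_t>0$ for some $t$ is equivalent to \eqref{e:unifergodicity} and that the result then follows from the correctness of the read-once algorithm. You instead give a direct algebraic argument. The central device is the decomposition
\[
P^t=\epsilon_t\,\mathbf 1\rho+(1-\epsilon_t)\,Q,
\]
obtained by conditioning on whether $V_1,\dots,V_b$ makes the block coalescent, with $\rho$ the common-output law and $Q$ the conditional $t$-step kernel given non-coalescence. You then compute $\mathrm P(O=w)=\sum_{k\ge1}\epsilon_t(1-\epsilon_t)^{k-1}(\rho Q^{k-1})(w)$ from the algorithmic description, and you iterate $\pi_{\mathrm{inv}}=\epsilon_t\rho+(1-\epsilon_t)\pi_{\mathrm{inv}}Q$ (which follows from $\pi_{\mathrm{inv}}P^t=\pi_{\mathrm{inv}}$ and $\pi_{\mathrm{inv}}\mathbf 1=1$) to obtain the same geometric series; the remainder term dies because $(1-\epsilon_t)^K\to0$. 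This is essentially Wilson's own argument specialized to the present finite/countable-state setting, so the route is not fundamentally new, but it does make the paper's lemma self-contained rather than imported.

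Two minor remarks. First, $Q$ is in fact stochastic (each row sums to $1$), not merely sub-stochastic: $\sum_w Q_{z,w}=\mathrm P(Y_t^{(s)}(z)\in\Omega\mid B\text{ not coalescent})=1$; your phrasing ``total mass at most $1$'' is therefore a harmless overstatement of generality, and the remainder bound you want is an equality. Second, the decomposition $P^t=\epsilon_t\mathbf 1\rho+(1-\epsilon_t)Q$ presupposes $\epsilon_t<1$ for $Q$ to be defined; if $\epsilon_t=1$ then $M_1=M_2=1$ almost surely, $O\sim\rho$, and directly $\pi_{\mathrm{inv}}=\rho$ from $\pi_{\mathrm{inv}}P^t=\rho$, so the edge case is trivial but should be noted for completeness. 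With those small points attended to, your argument is a valid proof.
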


\subsection{Proofs 
and example}\label{s:proof}

\begin{proof} 
To prove Theorem~\ref{t:3}, assume \eqref{e:unifergodicity} and let $\pi_{\mathrm{inv}}$ be the unique $P$-invariant PMF and $f_{\mathrm{inv}}$  the corresponding shift invariant PDF. Using Theorem~\ref{thm1}, $X=0.X_1X_2...\sim f$ and $N$ can be generated as follows.
First, generate $(X_1,...,X_N)$ from the PMF \eqref{e:PXN}. If $N<s$ then generate $(X_{N+1},...,X_s)$ such that $(X_{N+1},...,X_s)$ conditioned on $(X_1,...,X_N)=(x_1,...,x_n)$ has PMF 
$$p(x_{n+1},...,x_s\,|\,x_1,...,x_n)=\ell_{x_1,...,x_s}/\ell_{x_1,...,x_n}\quad \mbox{whenever $\ell_{x_1,...,x_n}>0$.}$$ 
This is in accordance with \eqref{e:fdd}. Second, independent of $(X_1,...,X_{\max\{N,s\}},N)$, generate IID $V_1,V_2,...\sim\mathrm{Unif}(0,1)$ and use the SRS \eqref{e:srs11} when generating
$$X_{>{\max\{N,s\}}}= 
Y^{(s)}(X_{\max\{N,s\}-s+1},...,X_{\max\{N,s\}}).$$ 
Thus $(X_1,...,X_{\max\{N,s\}},N)$ is independent of the output $O$ of the read once algorithm. 
 By Lemma~\ref{l:3OLD}, 
$$X^{(s)}_{\max\{N,s\}+M-s+1}=\left(X_{\max\{N,s\}+M-s+1},...,X_{\max\{N,s\}+M}\right)=O\sim\pi_{\mathrm{inv}}.$$ 
Combining these facts with Corollaries~\ref{c:inv-PDF} and \ref{c:stat-inv-PDF}, 
(A) in Theorem~\ref{t:3} follows. From (A) follows (B). Finally, since $(X_1,...,X_{\max\{N,s\}},N)\perp\! \! \!\perp (V_1,V_2,...)$, (C) follows from the read once algorithm. 
Hence
the proof of Theorem~\ref{t:3} is completed.
\end{proof}

\begin{proof} To prove  Corollary~\ref{c:coupling2},
imagining that the first block starts just after time $\max\{N,s\}$. Then $\max\{N,s\}+M$ is the time at  
which the output of the read once algorithm has been produced. 
Hence, using (A) in Theorem~\ref{t:3}, along similar lines as in the proof of Corollary~\ref{c:coupling1},
 Corollary~\ref{c:coupling2} is obtained.
\end{proof}

\be\label{ex:final}
Consider again Example~\ref{ex:2} when $\beta$ is the pseudo golden mean of order $s+1$. Since $\Omega$ is finite and a Markov chain with transition matrix $P$ given by \eqref{e:Pex} is irreducible and aperiodic, $\epsilon_t>0$ for all sufficiently large $t$. So for large $t$, the assumptions in Theorem~\ref{t:3} 
are satisfied, and hence the results in Theorem~\ref{t:3} and Corollary~\ref{c:coupling2} apply.

Consider the case $s=1$. 
Solving 
\eqref{e:inv} to obtain $\pi_{\mathrm{inv}}$ and then using 
\eqref{e:finv} together with the fact that $\beta^2=\beta+1$, one gets
$$\pi_{\mathrm{inv}}=\left(\frac{\beta^2}{1+\beta^2},\frac{1}{1+\beta^2}\right),\quad f_{\mathrm{inv}}(x)=
\begin{cases}
(1+2\beta)/(2+\beta)&\text{if }0<x<1/\beta\\
(1+\beta)/(2+\beta)&\text{if }1/\beta<x<1.
\end{cases}$$ 
Choosing the updating function $g$ as in \eqref{e:inversion} and
letting $V\sim\mathrm{Unif}(0,1)$, then   
	\[\epsilon_1=P(F^-(V\,|\,0)=F^-(V\,|\,1))=P(V\le\beta^{-1})=\beta^{-1}=0.6180...\]
	This is for the case $t=1$, so the block length is just $b=1$
and the expected running time of the read once algorithm is just
$$\mathrm E (M_1+M_2)=2/\epsilon_1=\sqrt 5+1=3.2361...$$
So it may be expected that $R_n\sim f_{\mathrm{inv}}$ for small values of $n$. 
\ee

\section{Concluding remarks}\label{s:conclusion}

This section concludes with a discussion of the main results of this paper, including some open problems for future research. 

Assuming only almost everywhere lower semi-continuity of $f$, 
 Theorem~\ref{thm1} establishes the general results of this paper. In my opinion,  the two most important general results are that $S\perp\! \! \!\perp U\sim\mathrm{Unif}(0,1)$ and $R\,|\,S$ follows a distribution which does not depend on $f$ but it is specified by ratios of the lengths of intervals in the nested subdivisions, cf.\ \eqref{e:fdd}. 
 The first result leads to a discussion in Section~\ref{s:f1} of
statistical modelling, and the importance of the latter result is discussed in Section~\ref{s:f2}. 

\subsection{Statistical models}\label{s:f1}

After Theorem~\ref{thm1}  it was noticed that $S=(X_1,...,X_N)$ is a sufficient statistic when $N$ is treated as a latent variable. Hence, for likelihood based statistical inference, no matter if it is in a classical/frequentist or Bayesian setting,    
the likelihood function based on $(X,N)$ is equivalent to the likelihood based on the discrete random variable
$S$, cf.\ \cite{B-N}. So a natural question is how to specify 
 the distribution of $S$. 
 
 Statistical models where $N$ is expected to be small would possibly suffice in many applications:
For the case of base-$q$ expansions, specific examples of PDFs $f$ in \cite{HerbstEtAl3} show that $N$ will be small when $q$ is not small and $f$ does not deviate much from $f_{\mathrm{inv}}=1$. 

Since $S$ is a (discrete) random variable of varying dimension, one modelling approach could be inspired by the construction of point process models \cite{Daley,JM}. For example, in analogy with the use of Janossy densities when specifying finite point process densities,   by specifying the distribution of first $N$ and next $S\,|\,N$. Or, in the same spirit as when Gibbs distribution models for finite point process are defined, specifying the distribution of $S$ with respect to some natural reference measure under which e.g.\  $X_1,...X_N\,|\,N$ are IID and independent of $N$. 

In connection to Bayesian inference, another question is how to construct a random PDF $f(\cdot\,|\,Y)$, assuming $Y$ is a stochastic process  
such that 
$\mathrm P(X\in B\,|\,Y)=\int_B f(x\,|\,Y)\,\mathrm dx$
for any $B\in\mathcal B$ (the class of Borel sets included in $I$).
One possibility is the mixture of finite Polya tree distributions given by \eqref{e:mixture} below which is obtained as follows.
 Let $I$ be the unit interval and
  $\mathcal X=\{0,1\}$. 
Denote $B(\alpha_0,\alpha_1)$ the beta-distribution with shape parameters $\alpha_0\ge 0$ and $\alpha_1\ge 1$ such that $\alpha_0+\alpha_1>0$, where if $k\in\{0,1\}$ and $\alpha_k=0$ then $B(\alpha_0,\alpha_1)$ is concentrated at the point $k-1$ and hence
  the value of $\alpha_{1-k}>0$ plays no role.  
  Suppose
$Y=\{Y_{x_1,...,x_n}\,|\,(x_1,...,x_n)\in\{0,1\}^n,\,n\in\mathbb N\}$
where 
the $Y_{x_1,...,x_{n-1},0}$'s (interpreted as $Y_0$ if $n=0$) are independent $B(\alpha_{x_1,...,x_{n-1},0},\alpha_{x_1,...,x_{n-1},1})$-distributed,
$Y_{x_1,...,x_{n-1},1}=1-Y_{x_1,...,x_{n-1},0}$, and
  $\alpha_{x_1,...,x_{n}}=0$ if $\ell_{x_1,...,x_{n-1}}>0$ and $\ell_{x_1,...,x_{n}}=0$. 
Let $N$ be a non-negative integer-valued random variable which is independent of $Y$.
Conditioned on $(Y,N)$, 
for $n=0,...,N$, let the $X_n$'s be independent and let $Y_{X_1,...,X_{n-1},0}$ be the probability that $X_{n}=0$. Thereby, conditioned on $Y$, $S=(X_1,...,X_N)$ has been generated. In accordance with Theorem~\ref{thm1},
  conditioned on $S$,  independent of $Y$,
if $N=0$ then generate 
$X\sim\mathrm{Unif}(I)$ 
else generate $E\sim\mathrm{Unif}(0,\ell_{X_1,...,X_N})$ and set $X=a_{X_1,...,X_N}+E$. Thus, conditioned on $Y$ and $N=n$ with $\mathrm P(N=n)>0$, $X$ follows a random PDF $f(\cdot\,|\,Y,N=n)$ which for $x\in I_{x_1,...,x_n}$ is given by
\[f(x\,|\,Y,N=n)=\prod_{j=1}^n Y_{x_1,...,x_{j-1},0}^{1-x_j}Y_{x_1,...,x_{j-1},1}^{x_j}
\] 
where $\prod_{j=1}^0\cdots$ is interpreted as 1.
This random PDF, where $N=n$ is fixed, 
specifies
a finite Polya tree distribution in the sense of Lavine \cite{lavine2}; if $N$ was replaced by $\infty$ when generating $X_1,X_2,...$ above, the random probability distribution given by 
$\mathrm P(0.X_1X_2...\in B\,|\,Y)$ with $B\in\mathcal B$ would be a Polya tree distribution as defined by Lavine \cite{lavine}. 
 Arguments for considering instead a finite Polya tree distribution can be found in \cite{lavine2,WongMa}. For instance, a Polya tree distribution (in contrast to the finite Polya tree distribution) is not always absolutely continuous (unless the $\alpha$'s increase sufficiently rapidly which leads to problems of robustness, cf.\ \cite{lavine2}), and if it is absolutely continuous then the random PDF is almost surely discontinuous almost everywhere. Note that $X\,|\,Y$ specifies a random probability distribution $\mathcal P$ on $\mathcal B$ which is absolutely continuous with random PDF
\begin{equation}\label{e:mixture}
f(x\,|\,Y)=\sum_{n=0}^\infty f(x\,|\,Y,N=n)\mathrm P(N=n)
\end{equation}
(setting $f(x\,|\,Y,N=n)=0$ if $\mathrm P(N=n)=0$).
The use of $\mathcal P$ for statistical modelling and a comparison with other ways of imposing random stopping rules in a Polya tree distribution (including allowing optional stopping as in \cite{WongMa}) {\color{black} will be investigated in joint work with Mario Beraha \cite{mario}.}


Yet another interesting question is how to model random nested partitions? For instance, if $I$ is a unit interval and $\mathcal X=\{0,1\}$ is binary, one may consider a distribution of the lengths $\{\ell_{x_1,...,x_n}\,|\,(x_1,...,x_n)\in\{0,1\},n\in\mathbb N\}$ which is of a similar type as for $Y$ above and is independent of $(N,Y)$, and then proceed in a similar way as above when specifying a model for $S$ conditioned on both $Y$ and the random nested partitions {\color{black}(this will also be investigated in \cite{mario}).}
 How should this be extended to multi-variate cases (where the ideas behind randomized partitioning schemes in \cite{WongMa} may be a source of inspiration)?
How should one create random
 models for the intervals used to define the Markovian constructions in Section~\ref{s:2.1}, including the case of
  generalized L\"uroth series in Example~\ref{ex:a}?
These questions are left for future research.

\subsection{How important is the Markovian structure?}\label{s:f2}

 Birkhoff's ergodic theorem is a major tool in metric number theory when studying ergodic dynamical systems in connection to stationary digit systems $X_1,X_2,...$, see e.g.\ \cite{Karma,Dajani2,Fritz}. Birkhoff's theorem is in close agreement with Corollaries~\ref{c:Luroth} and \ref{c:coupling1}, Theorem~\ref{t:3}, and Corollary~\ref{c:coupling2}, although there are differences. To see this, suppose $X_{>0}$ is stationary and ergodic. Here, ergodicity means that $\mathrm P(X\in A)\in\{0,1\}$ whenever $A\subseteq I$ is a Borel set such that $.x_1x_2...\in A$ implies $.x_2x_3...\in A$. Let $h:I\mapsto\mathbb R$ be a Borel function where the mean $\mathrm Eh(X)$ exists. Then Birkhoff's theorem states that ``the time average converges to the space average'':
 $$\frac1m\sum_{n=1}^m h(X^{[n]})\to\mathrm Eh(X)\quad \mbox{as }m\to\infty.$$ 
 On the other hand, the results in the present paper state that under certain regularity conditions, but without assuming $X_{>0}$ is stationary, it is 
 not possible to distinguish between the distributions of the remainders $X^{[n]},X^{[n+1]},...$
  when $n$ is larger than a certain random time: At the random time $N+1$ (under the conditions given in Corollary~\ref{c:Luroth}) or $\max\{N,s\}+M$ (under the conditions given in Theorem~\ref{t:3}) and further on in time, 
 the remainders are exactly following the shift-invariant PDF $f_{\mathrm{inv}}$. 
 Moreover, under the conditions given in Corollaries~\ref{c:coupling1} and \ref{c:coupling2}, the distribution of $X^{[n]}$ converges towards the invariant distribution with respect to the total variation distance. 
 
For this convergence result the assumption that $f$ is 
LSC almost everywhere may be replaced by other assumptions: For a general PDF $f$, if there is only one PDF invariant under the shift, namely $f_{\mathrm{inv}}$, and if $\int_I|g^{[n]} - f_{\mathrm{inv}}|\to 0$ for a PDF $g^{[n]}$ corresponding to a smooth PDF $g$ (using a similar notation as for $f^{[n]}$ and $f$ in Section~\ref{s:coupling-const2}), the triangle inequality and the inequality $\int_I|f^{[n]} - g^{[n]}|\le \int_I|f - g|$ give the convergence result. Moreover, bounds for the rate of convergence  are given in \cite{HerbstEtAl} for the case of base-$q$ expansions (Example~\ref{ex:a}), in \cite{HerbstEtAl2} for $\beta$-expansions when $\beta$ is the golden mean (Example~\ref{ex:2}), and in \cite{arXiv} when $\beta$ is the pseudo golden mean (Example~\ref{ex:2}). For these cases the assumption that $f$ is LSC almost everywhere is indeed not needed. 

For cases where the Markovian structure \eqref{e:MC2} is not satisfied for any $s\in\mathbb N_0$, it would of course also be interesting to establish similar convergence results, to study the distribution of $R\,|\,S$ given by \eqref{e:fdd} (which do not depend on $f$), and if possible
to establish a coupling between $X$ and a random time $K$ (depending on $f$) so that $X^{[K]}\sim f_{\mathrm{inv}}$. Here, one particular interesting case is continued fraction representations, cf.\ Example~\ref{ex:contfrac}, where $f_{\mathrm{inv}}=f_{\mathrm G}$  is given by \eqref{e:fG}.

\subsubsection*{Acknowledgements} 
Supported by The Danish Council for Independent Research —
Natural Sciences, grant DFF – 10.46540/2032-00005B.
I am grateful to Anne Marie Svane and Ira Herbst for a careful reading of a draft of this paper and their useful comments. Discussions with Charlene Kalle and Simon Kristensen on various details, including continued fractions, are appreciated. I thank Mario Beraha for pointing my attention to Polya tree distributions.






\end{document}